\newtheorem{thm}{Theorem}[section]
\newtheorem{conj}{Conjecture}[section]
\newtheorem{lem}[thm]{Lemma}
\newtheorem{que}{Question}
\newtheorem{eg}[thm]{Example}
\newtheorem{rem}[thm]{Remark}
\newenvironment{thm-A}
{\noindent{\bf Theorem A.--}\it}{\\}
\newenvironment{thm-M}
{\noindent{\bf Main Theorem.}\it}{\\}
\newenvironment{thm-AA}
{\noindent{\bf Theorem A'.}\it}{\\}
\newenvironment{thm-B}
{\noindent{\bf Theorem B.--}\it}{\\}
\newenvironment{thm-C}
{\noindent{\bf Theorem C.--}\it}{\\}
\newenvironment{thm-BP}
{\noindent{\bf Bell-Poonen Theorem.--}\it}{\\}
\newenvironment{thm-BB}
{\noindent{\bf Theorem B'.}\it}
\def\R{\mathbf{R}}
\def\Z{\mathbf{Z}}
\newcommand{\Addresses}{{
  \bigskip
  \footnotesize

  O.~Paris-Romaskevich \textsc{Aix Marseille Univ, CNRS, Centrale Marseille, I2M, Marseille, France}\par\nopagebreak
  \textit{E-mail address} O.~Paris-Romaskevich: \texttt{olga.romaskevich@math.cnrs.fr}

}}
\begin{document}

%
%
\title{Tiling billiards and Dynnikov's helicoid}
\date{}
\author{Olga Paris-Romaskevich}

\maketitle
{\emph{To Anatoly Stepin, who helped me do my first steps as a researcher.}}

\begin{abstract} 
Here are two problems.  First,  understand the dynamics of a tiling billiard in a cyclic quadrilateral periodic tiling.  Second,  describe the topology of connected components of plane sections of a centrally symmetric subsurface $S \subset \mathbb{T}^3$ of genus $3$.  In this note we show that these two problems are related via a helicoidal construction proposed recently by Ivan Dynnikov.  The second problem is a particular case of a classical question formulated by Sergei Novikov. The exploration of the relationship between a large class of tiling billiards (periodic locally foldable tiling billiards) and Novikov's problem in higher genus seems promising,  as we show in the end of this note.

\emph{Bibliography : $25$ items; $5$ figures; MSC: Primary 37E35,  Secondary 37J60; keywords : Novikov's problem,  tiling billiards,  billiards,  translation surfaces}
\end{abstract}

\section{Introduction and intentions}
Tiling billiards are billiards in tilings.  They were first introduced only several years ago,  in the works of Davis and her coauthors,  see \cite{DavisNegative2018, Davis2018,  DavisHooper2016}.  The definition of the billiard flow is as follows.  Each time a ray of light crosses an edge between two tiles,  it refracts through this edge.  A new direction of the beam is obtained from the old one by reflection with respect to the crossed edge,  following thus the Snell's law of refraction with coefficient $-1$,  see Figure \ref{fig:refraction_law}.  

The goal is to understand the dynamics of such tiling billiards.  What are typical  trajectories? And atypical ones? The answers to these questions,  and the dynamics in general,  depend strongly on the form of the underlying tiling.  

\begin{figure}
\centering
\includegraphics[scale=0.16]{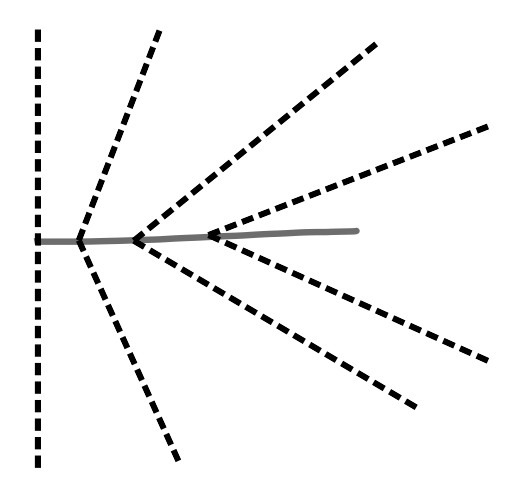}
\caption{Four beams of light crossing a horizontal line under tiling billiard law. }\label{fig:refraction_law}
\end{figure}

To this date,  our community has reached relative success in the understanding of the dynamics of three non-trivial tiling billiards.  These are,   \emph{trihexagonal tiling}  \cite{DavisHooper2016}; \emph{periodic triangle tilings}
\cite{Davis2018},   \cite{HubertPaRo2019},  \cite{PaRo2019}; 
and \emph{periodic cyclic quadrilateral tilings} \cite{DHMPRS2020}.  In particular,  the dynamics of a tiling billiard in a parallelogram tiling seems at the moment completely obscure.

One of the reasons to be interested in tiling billiards is their connection to classical  objects in mathematics.  For example,  the dynamics of triangle tiling billiards is equivalent to that  of Arnoux-Rauzy family of interval exchange transformations and their rel-deformations.  The exceptional set of trajectories of these tilings is parametrized by a famous fractal object,  the \emph{Rauzy gasket},  see \cite{ArnouxStarosta2013} for its definition. 

In this work we point out a new connection of triangle and cyclic quadrilateral tiling billiards to another classical subject which is a so-called Novikov's problem.  This problem studies the connected components of plane sections of triply periodic surfaces.  Of particular interest are chaotic components -- curves such that there closure in the fundamental domain of the surface fills in the subsurface of genus at least $3$.  Novikov's problem has deep connections to conductivity physics and is often presented as the problem on semiclassical motion of an electron in a homogeneous magnetic field.  We send our reader to \cite{NoDyMa} for the overview of the state of art on Novikov's problem from the point of view of experimental physics.  In this note we formulate and study this problem in purely topological terms.  Mathematically,  first observations were done by Zorich in \cite{Zorich},  and a breakthrough via Morse theory has been done by Dynnikov,  we refer especially to \cite{Dynnikov1999TheGO}.  The generalizations of the problem related to quasi-periodic functions having more than $3$ quasiperiods appear in \cite{DyNoquasi}.

The connection between tiling billiard systems and Novikov's problem has been hinted to us by Dynnikov.  His constuction gives a title to this work,  and we make it explicit.

Once the connection between two subjects,  tiling billiards and Novikov's problem,  is established,  we show how the ideas from topology and Morse theory that were developped for treating the Novikov's problem,  apply to tiling billiard dynamics.  We formulate,  at the very end of this work,  Conjecture \ref{conj:probability} on the behavior of tiling billiards in a large class of tilings that includes triangle and cyclic quadrilateral tilings - \emph{locally foldable tilings}.  This conjecture is based on two strongly non-trivial results.  First,  Dynnikov's result on the generic behaviour of plane sections of $3$-periodic surfaces that he obtained at the end of the last century in \cite{Dynnikov1999TheGO}.  And second,  a recent result by Kenyon,  Lam,  Ramassamy and Russkikh \cite{Kenyon2018DimersAC} describing the space of parameters of locally foldable tilings,  in the setting of the dimer model.

\smallskip

The general intention of this article is to express hope for a new approach of the Novikov's problem through tiling billiard dynamics and,  in particular,  renormalization for such dynamics.  We refer our reader to  \cite{PaRo2019} and \cite{DHMPRS2020} for introduction to renormalization in tiling billiards.  Recently we have shown,  in collaboration with Dynnikov,  Hubert,  Mercat and Skripchenko,  that the measure of chaotic regimes in the Novikov's problem with central symmetry,  in genus $3$,  is equal to $0$.  This question has been open for $40$ years and we answer it using the renormalization of cyclic quadrilateral tiling billards.  We hope that the connection with tiling billiards will permit new discoveries for Novikov's problem in higher genus as well.

\bigskip

This note is structured in a following way.  In Section \ref{sec:folding},  we remind the folding procedure for tiling billiards and define locally foldable tilings,  and,  in particular,  triangle and cyclic quadrilateral periodic tilings.  The billiards in these two last classes of tilings are the main dynamical systems discussed in this note.  The Section \ref{sec:folding} introduces the folding map and the so-called parallel foliations are obtained as preimages under folding of standard foliations by parallel lines.  In Section \ref{sec:Dynnikov} triply periodic surfaces called Dynnikov's helicoids are constructed,  correponding to tiling billiards.  In Section \ref{sec:Novikov} we remind the statement of Novikov's problem (paragraph \ref{subs:novikovs}) and make explicit the connection between tiling billiards and this problem,  using Dynnikov's helicoids.  Then,  we interpret classical results on Novikov's problem in terms of such tiling billiards (paragraph \ref{subs:interpretation}) and advance in the proof of the so-called Tree Conjecture for cyclic quadrilateral tilings (paragraph \ref{subs:Tree Conjecture 4}).  Sections \ref{sec:Dynnikov} and \ref{sec:Novikov} concern only triangle and cyclic quadrilateral tilings.  
In Section \ref{sec:generalisations_last},  we discuss open questions and perspectives for general locally foldable tilings.

\smallskip

We excuse ourselves for some familiarity with regularity.  The goal of this article is to share the ideas on a conceptual level.  The question of regularity of the surfaces in Novikov's problem is although extremely important,  and should be addressed if one wants to obtain precise statements.  It will be done in the upcoming work \cite{DHMPRS2020}, at least for cyclic quadrilateral tilings.

\section{Folding and its consequences}\label{sec:folding}
Any triangle (or quadrilateral) $P$ tiles a plane periodically as follows.  A fundamental domain $\mathcal{D}_m$ of a tiling is obtained by gluing $P$ with its centrally symmetric copy, with respect to a middlepoint $m$ of its side.  We call a corresponding $P$\textbf{-tiling} $\mathcal{P}$ a \textbf{(periodic) triangle (quadrilateral) tiling}.  Such tiling is $2$\textbf{-colorable} in a way that neighbouring tiles have different colors,  as a chess-board.  We denote $\mathcal{P}$ a tiling and the set of its tiles.

A tiling of a plane by polygons is \textbf{locally foldable} if it is $2$-colorable,  and the sum of angles in any vertex $v$ of the same-colored tiles containing $v$ is equal to $\pi$.  Triangle tilings are locally foldable,  and quadrilateral tilings are locally foldable only if the quadrilateral is \textbf{cyclic} (inscribed in a circle).  Tiling billard trajectories in locally foldable tilings share fundamental properties that we state in the points $2. $ and $3. $ of Theorem \ref{thm:first_properties} below.  Throughout this work,  we concentrate on the case of $P$-tilings by triangles and cyclic quadrilaterals.  In Section \ref{sec:generalisations_last} we discuss the general case.

\subsection{Folding and reduction of dynamics to dimension one}\label{subs:first}

\emph{I like to fold my magic carpet, after use, in such a way as to superimpose one part of the pattern upon another. }
Vladimir Nabokov, \emph{ Speak,  Memory}

\smallskip

Opening up a polygonal billiard table in order to understand the trajectories is now a habit for any mathematician : while a table is unfolded in order to produce a,  potentially,  self-overlapping and layered tiling of the plane,  a trajectory is unfolded into a straight line.  This idea (called Katok-Zemlyakov construction) provides,  in the case of rational tables,  a connection of billiard dynamics with translation flows.\footnote{For tiling billiards,  the connection with translation flows is not straightforward,  even though it may be given,  at least for locally foldable tilings. }

For tiling billiards,  the billiard table is \emph{already} a tiled plane.  We fold two neighbouring tiles along the crease like the wings of an asymmetric butterfly.  The segments of a tiling billiard trajectory in these two tiles fold into segments on the same line.  It is easy to show that for locally foldable tilings,  such a \textbf{folding map} is defined globally,  and not only along a path in a tiling.  The entire plane may be folded and any tiling billiard trajectory folds into a line inside this folding,  see \cite{PaRo2019} for more details and precise statements.  In the following,  we use this folding map,  unique up to isometry.

For triangle and cyclic quadrilateral tilings,  the image of a folded plane is particularly simple to understand.  The plane folds inside a disk,  and all of the vertices of a tiling fold onto its boundary - a circle.  The preimage of this circle under the folding is the union of all circumcircles of tiles.  This beautiful observation first appeared in the work \cite{Davis2018} by Baird-Smith,  Davis,  Fromm and Iyer.  Their work contains many illustrations,  as well as a pattern to cut out and experience the folding manually.

\smallskip

A powerful,  and elementary,  consequence of the existence of global folding is

\begin{thm}[\cite{Davis2018}, \cite{HubertPaRo2019},  \cite{PaRo2019}]\label{thm:first_properties}
The following holds for the trajectories of tiling billiards in periodic triangle and cyclic quadrilateral tilings.
\begin{enumerate}
\item[1.] The oriented distance $\tau(\gamma,  P)$  between an (oriented) segment of a trajectory $\gamma$ in a tile $P$ and the circumcenter of this tile,  is constant along $\gamma$,  that is,  $\tau(\gamma,  P)=\tau(\gamma)$;
\item[2.] every trajectory intersects any tile in at most one segment;
\item[3.] any bounded trajectory is periodic and stable under small perturbations (of a tile or initial condition): perturbed trajectory passes by the same tiles.
\end{enumerate}
\end{thm}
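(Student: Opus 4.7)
The plan is to use the global folding map $\Phi$ from Section \ref{sec:folding} as the central tool, exploiting the three structural facts the paper records about it: (i) $\Phi$ is an isometry on each tile, (ii) its image is the closed disk $\disk$ bounded by the common circumcircle of (the folded copies of) all tiles, with every vertex of $\mathcal{P}$ folding onto $\partial \disk$, and (iii) any tiling billiard trajectory $\gamma$ is sent, segment by segment, into a single straight line $\ell_\gamma \subset \disk$.

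For statement 1, I would first observe that because $\Phi|_P$ is an isometry sending the three (resp.\ four) vertices of a tile $P$ onto $\partial \disk$, it must send the circumcenter $c_P$ to the center $c$ of $\disk$; indeed $c$ is the unique point equidistant from those vertex images. Consequently, for any segment $\gamma \cap P$, the oriented distance to $c_P$ equals the oriented distance from $\ell_\gamma$ to $c$. The latter depends only on the line $\ell_\gamma$, hence only on $\gamma$, proving $\tau(\gamma,P)=\tau(\gamma)$. Statement 2 follows from the same setup together with convexity: the folded image $\Phi(P)$ is a cyclic polygon inscribed in $\partial \disk$, hence convex, so $\ell_\gamma \cap \Phi(P)$ is either empty or a single segment; pulling back by the isometry $\Phi|_P$ shows that $\gamma$ visits $P$ in at most one segment.

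For statement 3, I would combine the previous two with a finiteness argument. A bounded trajectory stays in a finite set of tiles; by statement 2 each such tile contributes at most one segment to $\gamma$, so the combinatorial tile sequence of $\gamma$ is finite. Since a segment in a tile is determined by its entry edge and entry point, the trajectory must eventually repeat an (edge, entry point) pair, and from that moment onward it closes up exactly, so $\gamma$ is periodic. To upgrade this to stability, I would note that a closed billiard trajectory in a tile cannot pass through a vertex (otherwise the refraction is undefined, contradicting periodicity), so it crosses each tile edge transversally at an interior point; a sufficiently small perturbation either of the initial condition or of the shape of a tile only moves these crossing points by a small amount, and hence keeps the sequence of visited tiles unchanged.

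The main obstacle is really packaged into the global existence of the folding map $\Phi$ on the whole plane, which relies on the local foldability hypothesis (angle sums of same-coloured tiles equal $\pi$ at each vertex); once $\Phi$ is in hand, the three conclusions are essentially one-line consequences of isometry, convexity of inscribed polygons, and transversality. I would therefore spend the proof quoting the global folding statement from \cite{PaRo2019} and \cite{Davis2018}, then present the three arguments above in succession, emphasising that the common circumcircle picture is what forces all circumcenters to coincide under folding — the geometric fact doing essentially all the work.
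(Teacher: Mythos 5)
Your route is the same one the paper itself gestures at (it states the theorem as ``a consequence of the existence of global folding'' and defers the proof to \cite{Davis2018}, \cite{HubertPaRo2019}, \cite{PaRo2019}): everything rests on the global folding map $\Phi$. Your arguments for item 1, item 2, and the periodicity half of item 3 are essentially the standard ones and are sound, modulo two points you should make explicit. For item 1 the distance is \emph{oriented}: adjacent tiles fold with opposite orientations, so you must check that the sign survives; it does because the sense in which the image chord is traversed also flips, and the two sign changes cancel. For item 2, convexity only tells you that every visit of $\gamma$ to $P$ folds into the single chord $\ell_\gamma\cap\Phi(P)$; you should add that each visit traverses that chord entirely (a billiard segment runs from edge to edge of the tile), so that distinct visits trace the same geometric segment, and that two traversals in opposite directions are excluded by the sign of $\tau$ -- only then does ``at most one segment'' follow, and only then is your finite state space $(\text{tile},\text{entry edge})$ legitimate for the pigeonhole argument.

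The genuine gap is the stability claim in item 3. Your argument -- small perturbations move the finitely many edge-crossing points only slightly, ``hence keep the sequence of visited tiles unchanged'' -- controls the perturbed trajectory for one combinatorial period only. After one period the perturbed orbit returns to a point merely \emph{close} to its start; nothing you say prevents the discrepancy from accumulating over successive periods until the orbit drifts out of the original cylinder of tiles, or simply from the perturbed orbit failing to close up at all. (For ordinary polygonal billiards this is precisely why periodic orbits need not survive a deformation of the table.) What the cited proofs actually use is item 1 itself: on a small transversal to $\gamma$, the invariant $\tau$ is a monotone coordinate, the first-return holonomy of the parallel foliation preserves $\tau$ and is therefore the identity, so the closed leaf lies in an open band of closed leaves with the same tile sequence -- that gives stability in the initial condition. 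Stability under deformation of the \emph{tile} is a further step: one needs that the return map of the induced interval exchange with flips is orientation-reversing along any periodic orbit, so that its fixed point persists when the map is perturbed. A finite-time continuity argument cannot substitute for either of these mechanisms, so this part of your proof needs to be redone.
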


\smallskip

The authors of \cite{Davis2018} use the folding in order to reduce the dynamics of the triangle tiling billiard to that of a family of $3$-interval exchange transformations (with flips!) on a circle.  For cyclic quadrilaterals,  similarly,  one gets $4$-IETs,  see paragraph \ref{subs:sym_and_IET} here.   
In Section $3$ of our work \cite{PaRo2019},  Theorem \ref{thm:first_properties} is generalized for all locally foldable tilings.
 
\subsection{Parallel foliations of trajectories}\label{subs:parallel}
Take any periodic trajectory $\gamma$ of a billiard in a tiling,  not even necessarily locally foldable.  A close enough to $\gamma$ trajectory $\gamma'$,  launched \emph{in the same direction} as $\gamma$,  is necessarily periodic and disjoint from $\gamma$.  The cylinder between $\gamma$ and $\gamma'$ is foliated by parallel tiling billiard trajectories.

For triangle and cyclic quadrilateral tiling billiards this idea can be pushed much further.   First,  as follows from Theorem \ref{thm:first_properties},  «in the same direction» can be omitted for triangle and cyclic quadrilateral tilings.  And second,  parallel trajectories foliate the full plane,  as we noticed and explored in \cite{PaRo2019}.  For any trajectory $\gamma$,  there exists a so-called  \textbf{parallel foliation} of the entire tiled plane (singular only in vertices of the tiling) such that its non-singular leaves are tiling billiard trajectories (one of which is $\gamma$) and that in restriction to any tile,  it is a foliation by parallel segments.  We invite our reader to discover a video work \emph{Refraction tilings} by O.  David on parallel foliations.\footnote{The video is accessible on the Youtube channel \emph{Dragonazible,}  or via \href{https://youtu.be/t1r1cO1V35I}
{\underline{https://youtu.be/t1r1cO1V35I}}.}

\begin{rem}
It is interesting to compare the parallel foliation construction for tiling billiards with the straight skeleton method for polygons \cite{Aichholzer} discovered in 1995.  This method was used by Demaines,  father and son,  and Lubiw,  see
\cite{Demaine},  in order to solve a following fold-and-cut problem.  Fix a polygonal motive on a piece of paper.  Can the paper be folded in such a way that a polygonal motive may be cut  out by exactly one scissor cut ? (and the answer is yes!)
\end{rem}

Parallel foliations are a handy tool to study the dynamics of locally foldable tiling billiards.  Indeed,  to one trajectory is associated an entire family of trajectories in the same parallel foliation.  The singular leaves of such a foliation uniquely define the symbolic dynamics of all other leaves.  We have introduced parallel foliations in \cite{PaRo2019} in order to prove the following Tree Conjecture (a theorem since),  formulated in \cite{Davis2018}.

\begin{thm}[\cite{PaRo2019}]\label{thm:tree_conjecture}
For any periodic trajectory $\gamma$ of a triangle tiling billiard,  the domain $\Omega_{\gamma}$ bounded by it doesn't contain any full tile of the tiling.  In other words,  all the vertices and edges of the tiling contained in $\Omega_{\gamma}$ form a graph which is a tree.
\end{thm}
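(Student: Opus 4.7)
The plan is to argue by contradiction, combining the parallel foliation $\mathcal{F}_\gamma$ of paragraph \ref{subs:parallel} with the folding map of paragraph \ref{subs:first}.

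Suppose a tile $T$ lies entirely in the interior of $\Omega_\gamma$. A leaf of $\mathcal{F}_\gamma$ through an interior point of $T$ cannot cross $\gamma$, because two distinct non-singular leaves of a foliation are disjoint and $\gamma$ itself (being periodic) avoids all tiling vertices. Such a leaf is therefore trapped in $\overline{\Omega_\gamma}$ and bounded; by point 3 of Theorem \ref{thm:first_properties} it is periodic and stable. This yields a one-parameter family of periodic tiling billiard trajectories sweeping across $T$, parametrized by the signed distance $\tau$ to the circumcenter of the visited tile (constant along each leaf by point 1 of Theorem \ref{thm:first_properties}), and filling an open annular region $C \subset \Omega_\gamma$.

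Then I would fold the picture globally. Under the folding, every tile becomes an inscribed triangle in one fixed disk, circumcenters go to the center, and tiling vertices go to the bounding circle. The parallel foliation folds to the foliation of this disk by chords parallel to the folded image of $\gamma$, indexed by $\tau$. Since $\gamma$ does not enter $T$, the folded image of $T$ is an inscribed triangle disjoint from the folded image of $\gamma$; being a connected region in the complement of a chord, it lies in a single half-disk, so its three vertices project onto a single open arc of the bounding circle. The two extremal vertices $v_{-}, v_{+}$ of $T$ in the direction perpendicular to the leaves correspond to the two chords that border the $\tau$-strip sweeping $T$; pulling back, they produce singular leaves $\gamma_{-}, \gamma_{+}$ of $\mathcal{F}_\gamma$ trapped in $\overline{\Omega_\gamma}$ and passing respectively through $v_{-}, v_{+}$.

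To finish, I would iterate this reasoning: each singular leaf $\gamma_\pm$ bounds, on its enclosed side, further cylinders of periodic leaves of $\mathcal{F}_\gamma$, and the combinatorics of these nested cylinders can be tracked using point 2 of Theorem \ref{thm:first_properties} (a trajectory visits each tile at most once) to control how many tiles are swept in each cylinder. An Euler-characteristic count of the tiling vertices and edges of $\overline{\Omega_\gamma}$, together with the crossings of $\gamma$ with tile edges as boundary data, should then contradict the assumption that $\overline{\Omega_\gamma}$ is a topological disk containing a full tile. The main obstacle is making this last step rigorous: it amounts to a Morse-type analysis of $\tau$ on $\overline{\Omega_\gamma}$, whose interior critical points are tiling vertices and whose indices are dictated by the local foldability (the $\pi$-angle condition at every vertex) and by the cyclic triangle geometry of each tile, and one must show that the presence of an interior tile $T$ forces a total index incompatible with the Euler characteristic of a disk.
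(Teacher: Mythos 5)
Your opening moves are in the same spirit as the argument this paper attributes to \cite{PaRo2019}: include $\gamma$ in its parallel foliation, observe that every leaf entering a tile $T\subset\Omega_\gamma$ is trapped and hence periodic by points 1 and 3 of Theorem \ref{thm:first_properties}, and reduce the problem to the behaviour of the \emph{singular} leaves of $\mathcal{F}_\gamma$ through tiling vertices inside $\Omega_\gamma$ (this is exactly the reduction to the Bounded Flower Conjecture recalled in paragraph \ref{subs:Tree Conjecture 4}). But the proof stops where the actual work begins. Your last paragraph is a program, not an argument: you do not carry out the ``Euler-characteristic count'' or the ``Morse-type analysis of $\tau$ on $\overline{\Omega_\gamma}$,'' you do not specify the indices of the interior vertices, and you do not exhibit the contradiction with $\chi(\text{disk})=1$. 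This is precisely the hard part: the paper indicates that for quadrilaterals the analogous step is still open, and that for triangles it required additional symmetries specific to triangle tilings (and, in the topological language of Section \ref{sec:Novikov}, a genus bound on the associated surface ruling out two oppositely oriented closed components in one plane section). None of that is supplied or replaced here, so the proposal does not prove the theorem.

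There is also a concrete local error in the folding step. From $\gamma\cap T=\emptyset$ you conclude that the folded image of $T$ is disjoint from the folded chord of $\gamma$, hence lies in one half-disk with all three vertices on a single arc. That does not follow: the chord determined by $(\tau(\gamma),\theta(\gamma))$ is the image of the entire set $\Gamma_{\tau,\theta}$, which generically consists of \emph{several} distinct trajectories (as the paper stresses when constructing the helicoid), and a different leaf with the same parameters may well cross $T$ even though $\gamma$ does not. So the folded triangle can straddle the chord, and the ``single arc'' claim, together with the identification of exactly two extremal vertices $v_\pm$ bounding the swept strip, is unjustified as stated. You can still speak of extremal chords tangent to the folded triangle, but the subsequent bookkeeping would have to be redone without the half-disk assumption.
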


In paragraph \ref{subs:Tree Conjecture 4},  we advance in the proof of the Tree Conjecture for cyclic quadrilateral tiling billiards.  For this,  we use a new idea that connects such tiling billiards with the topology of sections of periodic surfaces.  This idea is a heart of this article and deserves a section for itself. 

\section{Stairway to topology}\label{sec:Dynnikov}
Here we construct a helicoidal surface such that the connected components of its horizontal sections coincide with tiling billiard trajectories of the same parameter $\tau(\gamma)=\tau$,  see Theorem \ref{thm:first_properties}.  This surface will be triply periodic and,  via projection,  a compact subsurface of the $3$-torus.  The study of trajectories will be thus reduced to the study of plane sections of surfaces in $\mathbb{T}^3$ which is a classical problem discussed in Section \ref{sec:Novikov}.  We speculate in \cite{PaRo2019} on the existence of such a link between this problem and tiling billiards.  Recently,  Dynnikov made this connection precise,  and we make his brilliant idea explicit.

\subsection{Notations}\label{subsubs:notations}
For a triangle (or cyclic quadrilateral) tiling $\mathcal{P}$,  we fix the following notations that are respected throughout the article,  in particular on Figure \ref{fig:quasi_periodic}.

\smallskip

\textbf{Triangle tiling.} The sides of the tiles are $a,  b$ and $c$,  a clockwise tour of a tile reads $abc$.  The vertices opposite to the sides $a,b,c$ are $A,B,C$ and their angles are $\alpha,  \beta$ and $\gamma$.  Define vectors $\textbf{a}:=BC,  \textbf{b}:=CA,  \textbf{c}:=AB$.  Then $\textbf{a}+\textbf{b}+\textbf{c}=0$ and $\alpha+\beta+\gamma=\pi$.

\smallskip

\textbf{Cyclic quadrilateral tiling.} The sides of the tiles are $a,  b,  c$ and $d$,  a clockwise tour of a tile reads $abcd$.  The vertices are $A,B,C,D$ and $A=d \cap a,  B= a \cap b$ etc.  The angles in these vertices are $\alpha,  \beta,  \gamma$ and $\delta$.  Define vectors $\textbf{a}:=AB,  \textbf{b}:=BC,  \textbf{c}:=CD,  \textbf{d}:=DA$.  Cyclicity is equivalent to the relations $\alpha+\gamma=\beta+\delta=\pi$.  

\subsection{Quasi-periodicity of trajectory angles}
Fix a tile $P_0 \in \mathcal{P}$.  There exists a unique folding,  in sense of paragraph \ref{subs:first} such that the tile $P_0$ is fixed.  Then,  to any tiling billiard trajectory $\gamma$,  we associate two parameters.  

First,  we define the \textbf{energy} $\tau(\gamma)$ via Theorem \ref{thm:first_properties}.  It is positive if the trajectory turns counterclockwise around the circumcenter and negative otherwise.  When $\tau(\gamma)=0$,  the direction of the trajectory points in (or out of) the circumcenter.  The set of energy values is bounded and symmetric.  Up to a rescaling of the tiling,  we suppose that $\tau \in [-\pi,  \pi]$.

The second parameter is the \textbf{angle} $\theta(\gamma,  P)$ that makes an (oriented) segment of a trajectory in a tile $P$ with some fixed direction.  This parameter depends on a tile that the trajectory $\gamma$ crosses.  We denote by $\theta(\gamma):=\theta(\gamma,  P_0)$.  If some trajectory $\gamma$ doesn't cross a tile $P$,  the value $\theta(\gamma,  P)$ can still be defined and is done via the following

\begin{lem}\label{lem:lem_function}
For any tile $P_0$ in triangle (or cyclic quadrilateral) tiling,  there exists a unique function $f_P(\theta):=f(P,  \theta): \mathcal{P}\times [0,  2\pi) \rightarrow [0,  2\pi)$ such that $f(P_0,  \theta) \equiv \theta$ and that the following holds.
\begin{itemize}
\item[1.] For two tiles $P_1$ and $P_2$ of same (different) color,
$$f_{P_1}(\theta) \mp f_{P_2}(\theta)=\varphi(\textbf{v}),  \;\; \textbf{v}:=P_1-P_2.$$

Here $\varphi$ is a function of the vector $\textbf{v}$ that connects the barycenters of the tiles. \footnote{In other words,  this vector depends only on the relative positions of tiles.  This vector belongs to the period lattice for tiles of the same color.}  
\item[2.] For any trajectory $\gamma$ and any two tiles $P_1$ and $P_2$ that it crosses,  
$$\theta(\gamma,  P_1) \mp \theta(\gamma,  P_2)=\varphi(\textbf{v}).$$
Hence $\theta(\gamma,P)$ is globally and correctly defined for all $\gamma$ and all $P\in \mathcal{P}$,  even if $\gamma \cap P = \emptyset$.

\item[3.] The function $f$ is defined via Figure \ref{fig:quasi_periodic} in the case when the fixed direction is that of $AB$.  In general,  it is sufficient to define its values on all the tiles neighbouring to $P_0$ and then to continue by quasiperiodicity on all $\mathcal{P}$.
\end{itemize}
\end{lem}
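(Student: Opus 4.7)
The plan is to build $f$ by iterating the refraction law along paths in the tiling, and then to recognize it as the angular component of the folding map of paragraph \ref{subs:first}. Concretely, if $P_0 = Q_0, Q_1, \ldots, Q_n = P$ is a chain of pairwise adjacent tiles and $\psi_i$ denotes the direction angle of the edge shared by $Q_{i-1}$ and $Q_i$, I set inductively
\[
f(Q_i,\theta) \;=\; 2\psi_i - f(Q_{i-1},\theta) \pmod{2\pi},
\]
which is exactly Snell's law with coefficient $-1$ applied to a line of direction $f(Q_{i-1},\theta)$ sitting in $Q_{i-1}$. Unrolling the recursion, $f(Q_n,\theta) = (-1)^n \theta + c(Q_0,\ldots,Q_n)$, where $c$ is a linear combination of the $2\psi_i$ that depends a priori on the chain.

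The heart of the argument is path-independence: the construction must not depend on the chosen chain. Two chains from $P_0$ to $P$ differ by a loop in the dual graph of $\mathcal{P}$, and since the plane is simply connected, any such loop is a concatenation of elementary loops enclosing single vertices. Around a vertex $v$ surrounded by $2k$ tiles with consecutive edge directions $\psi_1,\ldots,\psi_{2k}$, a direct computation shows that the corresponding elementary loop acts on $\theta$ by
\[
\theta \;\longmapsto\; \theta \,+\, 2\sum_{i=1}^{k}(\psi_{2i}-\psi_{2i-1}).
\]
Each difference $\psi_{i+1}-\psi_i$ is the angle at $v$ of the tile wedged between the two consecutive edges, so the sum equals the total angle at $v$ of the same-colored tiles meeting there. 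Local foldability says this total is $\pi$, so the loop acts by rotation of $2\pi$, i.e.\ trivially; for triangle and cyclic quadrilateral tilings this hypothesis holds automatically, as $\alpha+\beta+\gamma=\pi$ and $\alpha+\gamma=\beta+\delta=\pi$ respectively. This step is the main obstacle, but it is already implicit in the global existence of the folding map established in paragraph \ref{subs:first}.

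Once $f$ is well-defined, properties 1--3 fall out quickly. For property 2, the rule defining $f$ is precisely the refraction law, so for a trajectory $\gamma$ the directions $\theta(\gamma,P)$ satisfy the same recursion along the visited tiles, giving $\theta(\gamma,P) = f(P,\theta(\gamma,P_0))$. For property 1, two tiles share the same (resp.\ different) colour exactly when any chain joining them has even (resp.\ odd) length, so the coefficient of $\theta$ in $f_{P_1}(\theta) \mp f_{P_2}(\theta)$ cancels and the quantity is constant in $\theta$. To see that this constant depends only on $\mathbf{v}=P_1-P_2$, translate a chain joining $P_1$ to $P_2$ by an arbitrary period of the tiling: edges are mapped to edges with identical direction angles, so the constant is translation-invariant and thus defines a function $\varphi$ of $\mathbf{v}$ alone; additivity $\varphi(\mathbf{u}+\mathbf{v})=\varphi(\mathbf{u})+\varphi(\mathbf{v})$ then follows by concatenating chains. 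Finally, property 3 is obtained by induction: the values of $f$ on the neighbours of $P_0$, together with the quasi-periodicity of property 1, determine $f$ on every tile of $\mathcal{P}$, proving uniqueness along the way.
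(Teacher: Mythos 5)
Your proof is correct, and it establishes somewhat more than the paper does explicitly. The paper's own proof is a one-liner: it invokes the global existence of the folding map (established in \cite{PaRo2019} and recalled in paragraph \ref{subs:first}) and reads off $\varphi(\mathbf{v})$ as the rotation angle by which the folded images of two same-colored tiles differ. You instead reconstruct the angular part of that folding from scratch: the refraction recursion $f(Q_i,\theta)=2\psi_i-f(Q_{i-1},\theta)$ along dual-graph chains, with path-independence reduced to the holonomy around a single vertex, which your computation identifies as a rotation by twice the total angle of the same-colored tiles at that vertex --- trivial precisely because local foldability forces this total to be $\pi$. This is the same mathematics (triviality of the vertex holonomy \emph{is} the obstruction to folding globally), but your version is self-contained, makes the role of the local foldability hypothesis completely transparent (and hence why cyclicity is needed for quadrilaterals), and cleanly explains the sign alternation $\mp$ via the parity of chain length matching the $2$-coloring. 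The paper's route is shorter because it can lean on the already-proved folding map; yours has the advantage of generalizing verbatim to arbitrary locally foldable periodic tilings, which is exactly the extension asserted without proof after the lemma. The only points worth tightening are the reduction of an arbitrary dual loop to vertex loops (standard, via simple connectedness of the plane and the face structure of the dual graph) and the claim that the constant depends only on $\mathbf{v}$ for \emph{differently} colored tiles, where one should note that two such pairs with equal difference vectors are themselves related by a lattice translation.
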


\begin{proof}
This follows obviously from the existence of folding.  The images of two tiles of the same color $P_1,  P_2$ map under folding differ by a circle rotation by $\varphi(P_1-P_2)$. 
\end{proof}

The function $f_P(\theta)$ is described also in Table 1 of \cite{Davis2018} for triangle tilings.  Although,  there it is only defined on «half» of the tiles.  On Figure \ref{fig:quasi_periodic} we picture the values of the function $f$ on the tiles that one can access in one or two steps from $P_0$.  A Lemma analogous to Lemma \ref{lem:lem_function} can be proven for any periodic locally foldable tiling.  

\begin{figure}
\centering
\includegraphics[scale=0.45]{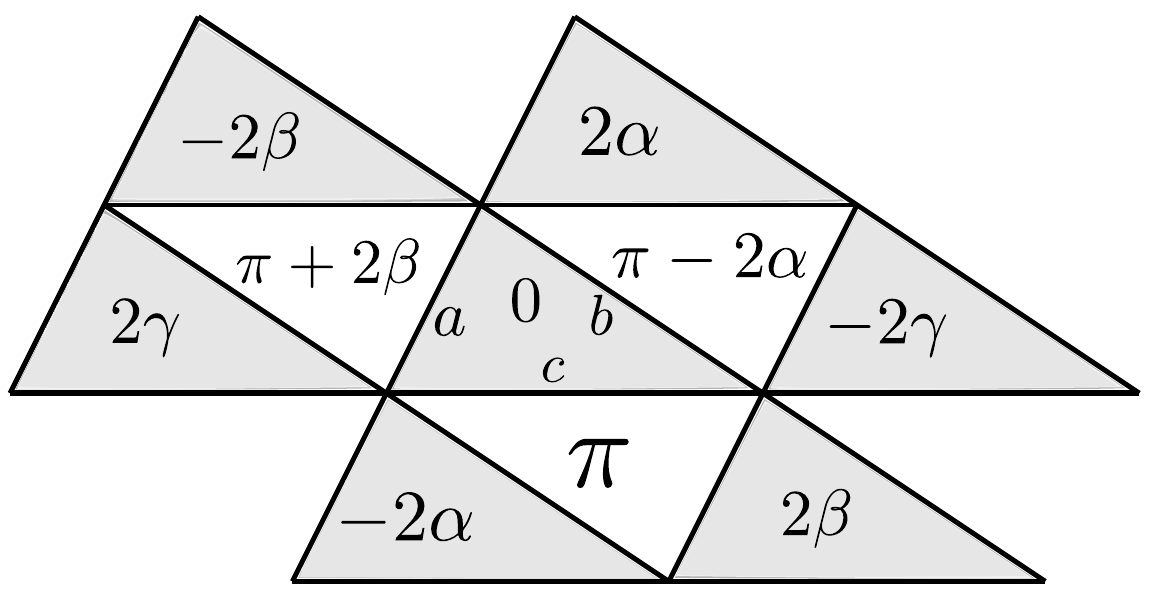}
\includegraphics[scale=0.6]{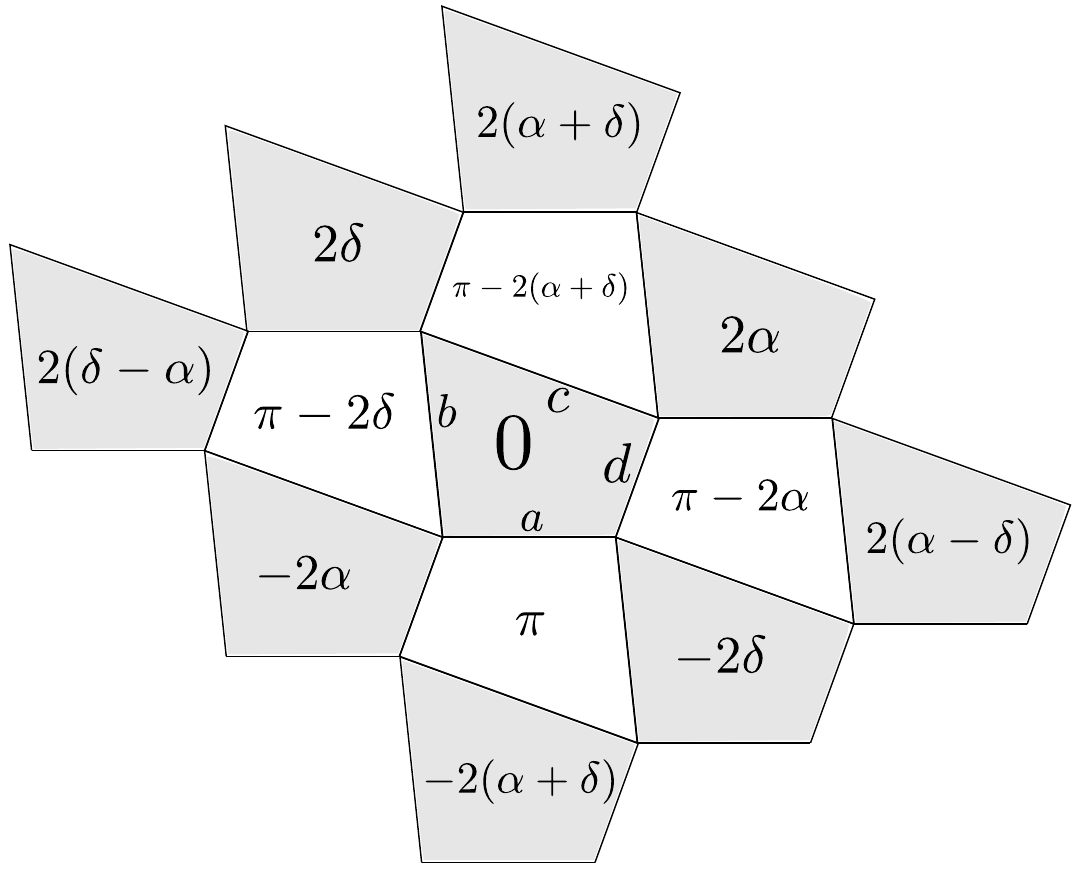}
\caption{The values of function $f_P(\theta) \pm f_{P_0}(\theta)$ in the vicinity of $P_0$,  for triangle and cyclic quadrilateral tilings.  The values are calculated using the notations of paragraph \ref{subsubs:notations}.  In both cases,  a fixed direction is chosen as that of the segment $AB$ in $P_0$.  }\label{fig:quasi_periodic}
\end{figure}

\begin{rem}
If a fixed direction is defined by some angle $\theta_0$ then,  on Figure \ref{fig:quasi_periodic},  one should add $\theta_0$ to all values of $\varphi(P-P_0)$ with $P$- grey and subtract it when $P$-white.
\end{rem}

\subsection{Dynnikov's helicoid: construction}\label{subsubs:Dynnikov}
In this paragraph we construct a one-parametric family $\{
\widehat{S_{\mathcal{P}}^{\tau}}
\}_{\tau \in [-\pi,\pi]}$ of piecewise smooth surfaces in $\R^3$,  associated to any triangle (cyclic quadrilateral) tiling $\mathcal{P}$.  All the trajectories with the same energy $\tau$ appear as horizontal sections of $\widehat{S_{\mathcal{P}}^{\tau}}$.

\smallskip

Fix some parameters $\tau \in [-\pi,\pi]$ and $\theta \in [0,2\pi)$.  Let $\Gamma_{\tau,\theta}$ be the set of all tiling billiard trajectories $\gamma$ such that $\tau(\gamma)=\tau$ and $\theta(\gamma,  P_0)=\theta$.  We identify the set $\Gamma_{\tau,\theta}$ and the geometric union of the trajectories $\cup_{\gamma \in \Gamma_{\tau,\theta}} \gamma$.  Potentially (and generically,  as shown in \cite{HubertPaRo2019},  \cite{PaRo2019} and \cite{DHMPRS2020}),  the set $\Gamma_{\tau,\theta}$ consists of more than one trajectory.  This set is obtained as the union of curves which fold into the same chord defined by parameters $\tau$ and $\theta$. 

\begin{eg}
Fix some $\theta_0 \in [0, 2\pi)$.  Then the sets $\{\Gamma_{\tau,  \theta_0}\}_{\tau \in [-1, 1]}$ foliate the tiled plane and form a parallel foliation corresponding to any trajectory $\gamma$ with $\theta(\gamma, P_0)=\theta_0$.
\end{eg}

Let us now fix $\tau_0$.  Then the sets $\Gamma_{\tau_0,  \theta}$ overlap,  even inside one tile.  Similarly to the case of the geodesic flow,  we lift them up in another dimension,  in order to avoid intersection. 

\smallskip

Consider the euclidian space $\R^3=\{\left(\textbf{X},  \Theta\right)\}$ as a product of a tiled plane $\R^2$ with a coordinate $\textbf{X}$ on it,  and of an orthogonal line with a coordinate $\Theta$.  We now define a set $\widehat{S_{\mathcal{P}}^{\tau}}$ as a union of its horizontal sections: $\widehat{S_{\mathcal{P}}^{\tau}}:= \cup_{\theta \in [0,2\pi)} \Gamma_{\tau, \theta}$.  It is obviously a piecewise smooth surface.  

The surface $\widehat{S_{\mathcal{P}}^{\tau}}$ is built from many gradually turning «stairs» of trajectories with the same energy parameter.  Any tiling billiard trajectory $\gamma$ on the tiling $\mathcal{P}$ is a connected component of a horizontal section of the helicoid,  its height is defined by the angle parameter.  We call such a  surface $\widehat{S_{\mathcal{P}}^{\tau}}$ (depending strongly on the underlying tiling) \textbf{Dynnikov's helicoid of energy} $\tau$.  It has $3$ periods that do not depend on $\tau$,  as shows the following

\begin{lem}\label{lem:3-periodic}
Let $\mathcal{P}$ be a triangle or cyclic quadrilateral tiling,  and $\widehat{S_{\mathcal{P}}^{\tau}}$ a corresponding Dynnikov's helicoid of some energy $\tau \in [-\pi,\pi]$.  Then,  $\widehat{S_{\mathcal{P}}^{\tau}}$ is $3$-periodic with periods $V_1:=V_1(\mathcal{P}),  V_2:=V_2(\mathcal{P})$ and $V_3=(0,0,  2\pi)$ given by
\begin{itemize}
\item[1.] $V_1(\mathcal{P})=\left(-\textbf{c},  2\gamma\right),  V_2(\mathcal{P})=\left(-\textbf{a},  2\alpha\right)$ if $\mathcal{P}$ is a triangle tiling,  
\item[2.] $V_1(\mathcal{P})=\left(\textbf{a}+\textbf{b},  2\delta\right),  V_2(\mathcal{P})=\left(\textbf{b}+\textbf{c},  2\alpha\right)$ if  $\mathcal{P}$  is a cyclic quadrilateral tiling.
\end{itemize}
The notations here are consistent with those from paragraph \ref{subsubs:notations}.
\end{lem}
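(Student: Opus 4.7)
The plan is to view $\widehat{S_{\mathcal{P}}^{\tau}}$ as the graph whose horizontal slices are the sets $\Gamma_{\tau,\theta}$, and to translate the three-periodicity into a statement about how horizontal translations act on these slices. The third period $V_3=(0,0,2\pi)$ is immediate: the angle parameter $\theta$ is circle-valued, so $\Gamma_{\tau,\theta+2\pi}=\Gamma_{\tau,\theta}$, and the surface is automatically $2\pi$-periodic in the $\Theta$-direction.

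The heart of the argument is the following observation. Let $T_{\textbf{v}}$ denote horizontal translation of $\R^3$ by a vector $\textbf{v}$ lying in the period lattice of $\mathcal{P}$ and sending each tile to one of the same colour. I claim that $T_{\textbf{v}}\Gamma_{\tau,\Theta}=\Gamma_{\tau,\Theta-\varphi(\textbf{v})}$, which makes $(\textbf{v},-\varphi(\textbf{v}))$ a period of the helicoid. Three things need to be checked: (i) a translate of a tiling billiard trajectory is again such a trajectory, because $T_{\textbf{v}}$ preserves the tiling; (ii) the energy $\tau$ is translation-invariant, because each tile and its circumcenter move together, preserving the signed distance from $\gamma$ to the circumcenter of the tile it lies in; (iii) if $\gamma$ has angle $\Theta$ at $P_0$, then $T_{\textbf{v}}\gamma$ has the same absolute direction at $P_0+\textbf{v}$, and Lemma \ref{lem:lem_function} applied to the same-colour pair $(P_0+\textbf{v},P_0)$ shows that its angle at $P_0$ equals $\Theta-\varphi(\textbf{v})$.

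It remains to identify the declared $V_1, V_2$ with instances of this construction. For a triangle tiling, the standard description of the fundamental parallelogram as $P_0$ glued to its central reflection across the midpoint of a side shows that the lattice of same-colour tiles is spanned by any two of $\textbf{a},\textbf{b},\textbf{c}$ (the relation $\textbf{a}+\textbf{b}+\textbf{c}=0$ making the third redundant), so both $-\textbf{c}$ and $-\textbf{a}$ qualify. For a cyclic quadrilateral tiling, the analogous fundamental parallelogram makes the diagonal vectors $\textbf{a}+\textbf{b}$ and $\textbf{b}+\textbf{c}$ periods of the same-colour lattice. In each case the associated value of $\varphi$ is then read directly off Figure \ref{fig:quasi_periodic}, which records $f_P(\theta)\pm f_{P_0}(\theta)=\varphi(P-P_0)$ on the tiles reachable from $P_0$ in at most two steps; the angular parts $2\gamma, 2\alpha, 2\delta, 2\alpha$ appear there as the labels of the appropriate tiles.

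The main obstacle is the sign-and-colour bookkeeping in this last identification. The function $\varphi$ is additive on same-colour translations, but the relations of Lemma \ref{lem:lem_function} are built from interleaved colour-preserving ($-$) and colour-flipping ($+$) transitions, so the path from $P_0$ to $P_0+\textbf{v}$ must traverse an even number of flips; the two-step moves of Figure \ref{fig:quasi_periodic} do exactly this. The doubling that produces the angles $2\alpha, 2\gamma, 2\delta$ (rather than the single vertex angles governing a one-step flip) is the fingerprint of compounding two successive colour flips around a common vertex whose same-colour angle sum is $\pi$, i.e.\ of local foldability.
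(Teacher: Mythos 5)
Your proof is correct and follows essentially the same route as the paper: vertical periodicity from $\Gamma_{\tau,\theta}=\Gamma_{\tau,\theta+2\pi}$, then the observation (via Lemma \ref{lem:lem_function}) that a same-colour lattice translation by $\textbf{v}$ shifts the angle parameter by $\varphi(\textbf{v})$, so $(\textbf{v},-\varphi(\textbf{v}))$ is a period, with the generators and the values $2\gamma,2\alpha,2\delta$ read off Figure \ref{fig:quasi_periodic}. Your closing remarks on the even number of colour flips and the origin of the doubled angles are a welcome elaboration of bookkeeping the paper leaves implicit.
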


\begin{proof}
Periodicity in vertical direction is obvious since $\Gamma_{\tau, \theta}=\Gamma_{\tau, \theta+2\pi}$.  The rest follows from Lemma \ref{lem:lem_function}.  Indeed,  for two tiles $P_1,  P_2$ of the same color,  the difference of angle parameters is constant and equal to $\varphi(\textbf{v})$ with $\textbf{v}=P_1-P_2$.  One concludes that a point $(\textbf{X},  \Theta) \in \widehat{S_{\mathcal{P}}^{\tau}}$ belongs to a surface if and only if a shifted point $(\textbf{X}',  \Theta'):=(\textbf{X}+\textbf{v},  \Theta-\varphi(\textbf{v})) \in \widehat{S_{\mathcal{P}}^{\tau}}$ does.  By continuity,  the argument follows for differently colored tiles.

The lattice of periods is generated by the vectors $\textbf{v}_1:=\overrightarrow{BA}=-\textbf{c}$ and $\textbf{v}_2:=\overrightarrow{CB}=-\textbf{a}$ for triangle tilings and by the vectors $\textbf{v}_1:=\overrightarrow{AC}=\textbf{a}+\textbf{b}$ and $\textbf{v}_2:=\overrightarrow{BD}=\textbf{b}+\textbf{c}$ for cyclic quadrilateral tilings.  The values of $\varphi$ are given on Figure \ref{fig:quasi_periodic}.  
\end{proof}

\begin{figure}
\centering
\includegraphics[scale=0.2]{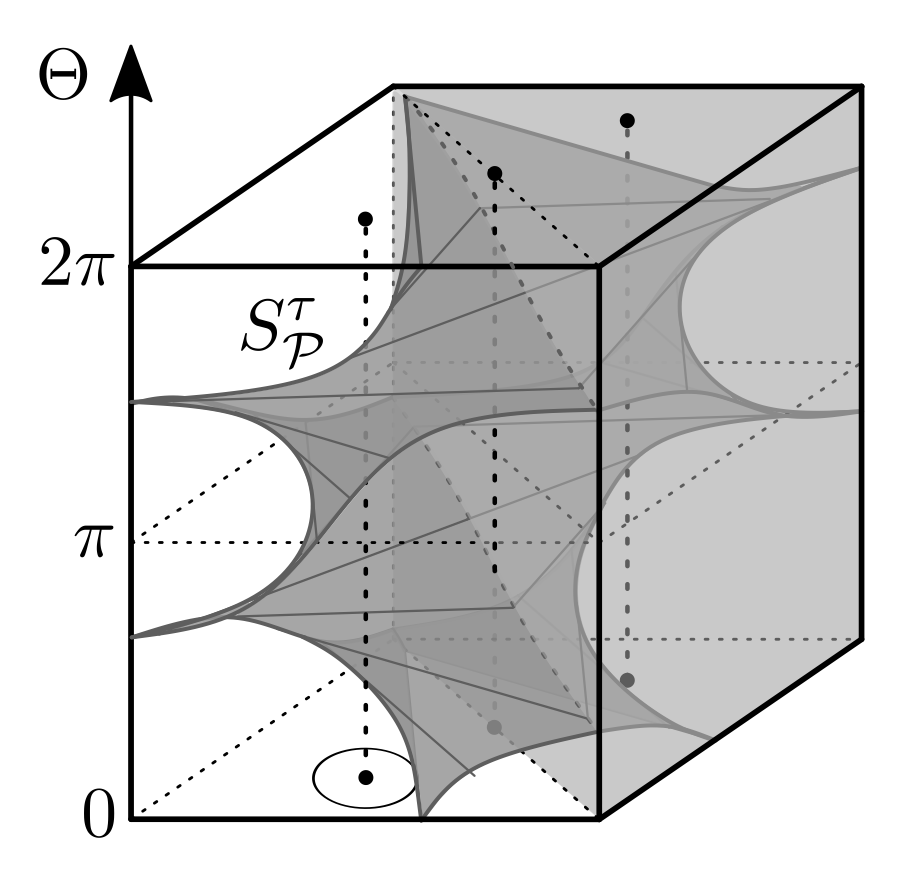}
\caption{The compact ruled surface $S_{\mathcal{P}}^{\tau}$ obtained as an intersection of Dynnikov's helicoid with a torus obtained by gluing parallel faces of the prism $\mathcal{D}\times[0, 2\pi)$. }\label{fig:helicoid}
\end{figure}

\subsection{Symmetries of  $S_{\mathcal{P}}^{\tau}$ and link to interval exchange maps}\label{subs:sym_and_IET}
We now describe some properties of Dynnikov's helicoid.  Consider a fundamental domain $\mathcal{D}_m$ of a $P$-tiling, as in Section \ref{sec:folding} and on Figure \ref{fig:symmetry}.  Fix a horizontal direction as the direction needed for the definition of the angle parameter as that of the edge containing $m$.  

Lemma \ref{lem:3-periodic} implies that the intersection with the prism $\mathcal{D}_m \times [0, 2\pi) \cap \widehat{S_{\mathcal{P}}^{\tau}}$ is a fundamental domain of the surface $\widehat{S_{\mathcal{P}}^{\tau}}$.  Under identification of the borders under shifts $V_j,  j=1,2,3$,  such a prism becomes a $3$-torus $\mathbb{T}^3_{\mathcal{P}}$. 
Let $\pi: \R^3 \rightarrow \mathbb{T}^3_{\mathcal{P}}$ be a corresponding projection.  Denote by $S:=S_{\mathcal{P}}^{\tau}=\pi\left( \widehat{S_{\mathcal{P}}^{\tau}}\right)$ a compact surface, represented on Figure \ref{fig:helicoid}.

\begin{lem}\label{lem:topology_calculations}
Let $\tau \in [-\pi,\pi]$,  and $\mathcal{P}$ be a triangle (cyclic quadrilateral) tiling.   For a corresponding Dynnikov's helicoid $\widehat{S}:=\widehat{S_{\mathcal{P}}^{\tau}}$ and $S:=\pi(\widehat{S})$,  the following holds:

\begin{itemize}
\item[1.] the surface $S$ is centrally symmetric with respect to the point $M:=(m,  \pi)$.  The quotient $S^M$ under the central symmetry with respect to $M$,  is a non-orientable surface; moreover,  this point $M$ belongs to $S$ if and only if $\tau=0$;
\item[2.] if $\tau=0$,  the surface $S$ has an additional symmetry under the map $\textbf{s}: \Theta \mapsto \Theta+\pi$.  The quotient
$^{S}/_{\textbf{s}}$ under this symmetry is homeomorphic to the projective plane $\mathbb{P}^2(\R)$.  Moreover,  for tiles containing its circumcenter,  the foliation on the projective plane induced by a horizontal foliation on $S$ is a foliation with one $3$ (or $4$)-prong singularity and three (or four) $1$-prong singularities;


\item[3.] if $P$ contains its circumcenter\footnote{for triangles it is equivalent to the acuteness} and $\tau=0$,  the genus of $S$ is equal to $3$.  For $P$ a triangle,  $S$ has two double saddles,  exchanged by $\textbf{s}$.  If $P$ is a cyclic quadrilateral,  the surface $S$ has four simple saddles,  exchanged in pairs via $\textbf{s}$.  If $P$ doesn't contain its circumcenter,  the genus of $S$ is equal to $1$.
\end{itemize}
\end{lem}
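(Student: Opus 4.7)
The plan is to establish each item in turn by combining the natural symmetries of the helicoid with a singularity count of the horizontal foliation.

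For item (1), I would start from the plane central symmetry $\sigma_m:\textbf{x}\mapsto 2m-\textbf{x}$, which preserves the tiling $\mathcal{P}$ and swaps its two colours. Applying Lemma~\ref{lem:lem_function} to a tile $P$ and its image $\sigma_m(P)$ (of opposite colour), the relation $f_{P_1}+f_{P_2}=\varphi(\textbf{v})$ forces the angle parameter of a trajectory to transform as $\theta\mapsto-\theta+c$ for a constant $c$ depending only on $\varphi(P_0-\sigma_m(P_0))$ and the reference direction; with the reference direction chosen as that of the edge containing $m$, direct inspection of Figure~\ref{fig:quasi_periodic} kills $c$ modulo $2\pi$. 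Together with the plane identity $\sigma_m(\Gamma_{\tau,\theta})=\Gamma_{\tau,-\theta}$, this yields $(\textbf{x},\Theta)\mapsto(2m-\textbf{x},2\pi-\Theta)$ as a symmetry of $\widehat{S_{\mathcal{P}}^{\tau}}$ descending to the central involution around $M=(m,\pi)$ on $\mathbb{T}^3_{\mathcal{P}}$. Non-orientability of $S^M$ follows because central symmetry of $\R^3$ is orientation-reversing. The criterion $M\in S\iff\tau=0$ is local: a point of $S$ realising $M$ must come from a $\sigma_m$-invariant trajectory, and the only such trajectories pass through the circumcenter, i.e.\ satisfy $\tau(\gamma)=0$.

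For item (2), I would use that at $\tau=0$ the time reversal of any $\gamma\in\Gamma_{0,\theta}$ lies in $\Gamma_{0,\theta+\pi}$ (reversal sends $\tau\mapsto-\tau$, immaterial here, and $\theta\mapsto\theta+\pi$). Hence $\Gamma_{0,\theta}=\Gamma_{0,\theta+\pi}$ as plane sets and $\mathbf{s}:\Theta\mapsto\Theta+\pi$ preserves $S_{\mathcal{P}}^{0}$. The involution $\mathbf{s}$ is fixed-point free on $\mathbb{T}^3_{\mathcal{P}}$; an analysis of the helicoidal structure shows that it acts on $S$ as an orientation-reversing free involution, so $S/\mathbf{s}$ is a closed non-orientable surface that the Euler characteristic together with non-orientability identifies with $\mathbb{P}^2(\R)$. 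The foliation singularities on $\mathbb{P}^2$ then sit above the vertices of the tiling inside $\mathcal{D}_m$: each vertex $v$ gives a singularity whose prong order equals the number of same-colour tiles meeting at $v$, i.e.\ half the valence. Local foldability (the sum of same-colour tile angles at each vertex equals $\pi$) then produces one distinguished $3$- or $4$-prong at the vertex corresponding to the circumcenter and three or four further $1$-prongs at the remaining vertices.

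For item (3), assume $P$ contains its circumcenter. Lift the singularities from $\mathbb{P}^2$ back to $S$ through the double cover $S\to S/\mathbf{s}=\mathbb{P}^2$: a $k$-prong singularity on $\mathbb{P}^2$ lifts either to a single $2k$-prong on $S$ or to a pair of $k$-prongs, according to whether it lies on the branch locus of the cover. Doing the bookkeeping yields two $6$-prong (double-saddle) singularities exchanged by $\mathbf{s}$ in the triangle case, and four $4$-prong (simple-saddle) singularities paired by $\mathbf{s}$ in the cyclic quadrilateral case. The Euler formula $\chi(S)=\sum_i(2-k_i)$ summed over lifted singularities then gives $\chi(S)=-4$ in both cases, hence $g(S)=3$. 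If instead the circumcenter lies outside $P$, the corresponding vertex ceases to be a singularity of the horizontal foliation (the tangent plane to $S$ above it is no longer vertical); removing the contribution of the $6$- or $4$-prong brings $\chi(S)$ up to $0$, so $g(S)=1$. The main obstacle is the vertex-by-vertex bookkeeping behind items (2) and (3): for each vertex of $\mathcal{D}_m$ one must determine the heights $\Theta\in[0,2\pi)$ at which $\widehat{S_{\mathcal{P}}^{0}}$ has a singular point above it, the prong order there (from local foldability and the colour pattern at the vertex), and the way $\mathbf{s}$ and $\sigma_M$ permute the resulting singularities. The dichotomy between the circumcenter being inside or outside $P$ is precisely the combinatorial criterion deciding whether the central vertex contributes a true foliation singularity on $S$ or a regular point, and once this is disentangled the genus and saddle structure follow immediately from the Euler formula.
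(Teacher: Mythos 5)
Your treatment of item (1) is essentially the paper's: the central symmetry of $\mathcal{D}_m$ induces the involution of $\widehat{S_{\mathcal{P}}^{\tau}}$ pairing the heights $\theta$ and $\pi-\theta$ (cf.\ Figure \ref{fig:symmetry}), and $M\in S$ exactly when the $\sigma_m$-invariant trajectory through $m$ exists, which forces $\tau=0$. The trouble is in items (2) and (3), where you argue \emph{through} the quotient $S/\mathbf{s}$ and back, and this route does not close up. The involution $\mathbf{s}:\Theta\mapsto\Theta+\pi$ is fixed-point free on $\mathbb{T}^3_{\mathcal{P}}$, hence free on $S$, so $S\to S/\mathbf{s}$ is an unbranched double cover: there is no ``branch locus'', a $k$-prong singularity downstairs can only lift to two $k$-prongs upstairs, and $\chi(S/\mathbf{s})=\chi(S)/2$. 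Since your item (3) asserts $\chi(S)=-4$, the quotient has $\chi=-2$ and cannot be identified with $\mathbb{P}^2(\R)$ by ``Euler characteristic together with non-orientability''; conversely, the singularity data you posit on $\mathbb{P}^2$ (one $3$- or $4$-prong plus three or four $1$-prongs, total index $+1$) could only lift through a free double cover to a surface with $\chi=+2$. Hence the lifting step that is supposed to produce the two $6$-prong (resp.\ four simple) saddles of item (3) contradicts the freeness of $\mathbf{s}$, and item (3) becomes circular with item (2). There are further slips: your rule ``prong order $=$ number of same-colour tiles at $v$'' gives $2$-prongs, i.e.\ regular points, at the vertices of a quadrilateral tiling, not the $1$-prongs you then count; the ``vertex corresponding to the circumcenter'' does not exist, since the circumcenter of a tile is not a vertex of the tiling --- the inside/outside dichotomy enters through which heights $\Theta$ the line through the circumcenter actually meets the tile, not through a vertex losing its singularity; and the formula $\chi(S)=\sum_i(2-k_i)$ is off by a factor of two (the index of a $k$-prong is $1-k/2$).

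The paper's computation of item (3) avoids the quotient altogether: take the height function $\Theta$ on $S$ directly, observe that its critical points sit over the lattice-classes of vertices of $\mathcal{D}_m$ (one class for triangles, two for cyclic quadrilaterals), each class contributing saddles at the finitely many heights at which the singular trajectory enters that vertex, and sum Morse indices --- two monkey saddles of index $-2$ at heights $\theta,\theta+\pi$ for triangles, four simple saddles of index $-1$ at heights $\theta_1,\theta_2,\theta_1+\pi,\theta_2+\pi$ for quadrilaterals --- giving $\chi=-4$ and $g=3$ in both cases, and $g=1$ in the obtuse case. If you want to keep a cover-theoretic presentation, you must first carry out this direct count on $S$ and only then descend; as written, neither (2) nor (3) is established.
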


\begin{figure}
\centering
\includegraphics[scale=0.5]{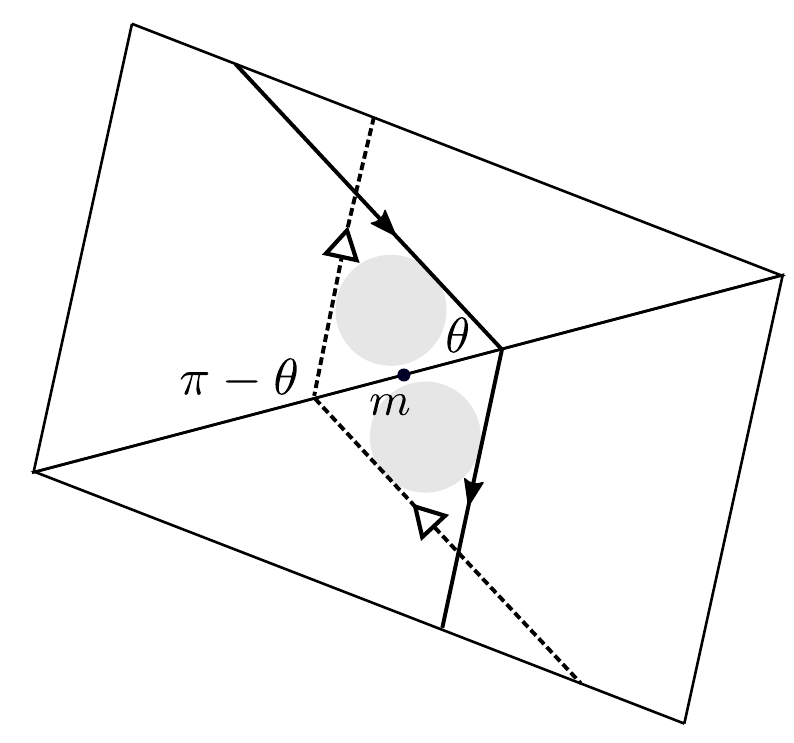}
\caption{Fundamental domain $\mathcal{D}_m$ of a triangle tiling and two trajectories passing through it,  $\gamma$ and $\gamma'$.  Each of the trajectories is tangent to both circles centered at circumcenter of radius $\tau(\gamma)=\tau(\gamma')=\tau$.  Moreover,  $\theta(\gamma)+\theta(\gamma')=\pi$.}\label{fig:symmetry}
\end{figure}

\begin{proof}
All of this is direct.  The first statement follows from the symmetry of $\mathcal{D}_m$.  If there exists a trajectory $\gamma$ crossing $\mathcal{D}_m \times \{\theta\}$,  there exists a trajectory $\gamma'$ crossing $\mathcal{D}_m \times \{\pi-\theta\}$,  see Figure \ref{fig:symmetry}.  If $\tau=0$,  the trajectory passing by $m$ and orthogonal to the edge that contains it,  belongs to $S$,  and $\gamma$ coincides with $\gamma'$ above (and has opposite orientation).  The  second statement follows from the existence of the additional symmetry and the calculation of Euler characteristic. 
In this particular case,  the interior and exterior of $S^0_{\mathcal{P}}$ in $\mathbb{T}^3_{\mathcal{P}}$ are isometric.

Suppose now that $P$ contains its circumcenter and $\tau=0$.  We calculate $\chi(S)$ as a sum of indices of its singular points with respect to the height function $\Theta$.  If $P$ is a triangle,  all of the vertices of $\mathcal{D}_m$ are identified under lattice action.  The surface $S$ has then two monkey saddles (index $-2$) corresponding to the angle parameters $\theta$ and $\theta+\pi$,  where the trajectories enter (or get out from) the vertex.  This gives $\chi=-4$ and $g=3$.  Analogous calculation may be done for «acute» cyclic quadrilaterals : $\mathcal{D}_m$ has two vertices,  modulo the action of the lattice.  They give then $4$ simple saddles on layers $\theta_1,  \theta_2,  \theta_1+\pi,  \theta_2+\pi$,  each of index $-1$.  Then,  once again,  $g=3$.  For the obtuse case,  the same calculation gives $g=1$. This proves the third point.
\end{proof}

\begin{rem}
It is not surprising that genus may fall drastically when the parameters change continuously since the helicoid is parametrically defined by $\theta$ and $\tau$.  A good exercise is to understand the change of genus of $S$ when $\tau$ changes.
\end{rem}

The surfaces $\widehat{S}$ and $S$ are equipped with natural oriented foliations by tiling billiard trajectories.  The intersection of $S$ with the border of prism $\mathcal{D}_m \times[0, 2\pi)$ consists of a circle,  and the first return map of the tiling billiard flow on this circle is an interval exchange transformation $T=T(P,  \tau)$ with $6$ or,  in case of quadrilaterals,  $8$ intervals of continuity.  If one passes to the quotient $S^M$,  such first return map is reduced to an interval exchange transformation $F(P,  \tau)$ of $3$ or $4$ intervals of continuity with flips,  and $T=F^2$.  The study of tiling billiard dynamics in triangle and cyclic quadrilateral tilings is then reduced to the study of parametric families of interval exchange transformations with flips.  This is the important leitmotiv of the works \cite{Davis2018, HubertPaRo2019,  PaRo2019} for triangles and of the work \cite{DHMPRS2020} for quadrilaterals.  

\section{Novikov's problem and tiling billiards}\label{sec:Novikov}

The helicoid construction from Section \ref{sec:Dynnikov} is elementary but crucial since it connects the dynamics of tiling billiards with a classical topology problem formulated in $1982$ by Novikov.  It concerns the level sets of quasiperiodic functions on the plane with $3$ quasi-periods and has important motivations coming from physics of metal conductivity.  Nowadays,  the interest to this problem is vivid in both mathematics and physics.  Novikov's problem is a field in itself and we do not aim to give an overview nor a bibliography of this rich subject.  Our goal is to point out a new connection -- the one with tiling billiard systems that,  hopefully,  can shed some light on the problem in itself.

\subsection{Novikov's problem : statement and generic behaviour}\label{subs:novikovs}

Consider a piecewise smooth function $f: \mathbb{T}^3 \rightarrow \R, 
\mathbb{T}^3=\R^3/\Z^3$.  Without loss of generality we suppose that $f$ takes values in the interval $[-\pi,  \pi]$.  Let $M_{\tau}=f^{-1}(\tau)$ be its level surface and $\widehat{M}=\pi^{-1}(M)$ be the $\Z^3$-covering in $\R^3$.  Here $\pi: \R^3 \rightarrow \mathbb{T}^3$ is a standard projection. 

\smallskip

\textbf{Novikov's problem.} Fix a covector $H=(H_1,  H_2, H_3) \in \mathbb{P}^2(\R)$.  Study the behavior of connected components of plane sections of $\widehat{M}$ by a family of parallel planes $H_1 x_1+ H_2 x_2+H_3x_3=\mathrm{const}$.

The corresponding parallel plane sections define an orientable foliation $\mathcal{F}$ on $M$.  We are interested in the closures of its leaves.  

For simplicity we suppose that the covector $H$ is totally irrational.  As Dynnikov showed in \cite{Dynnikov1999TheGO},  three qualitative behaviors are possible: trivial,  integrable and chaotic.  \textbf{Trivial behavior} means that all components of all $H$-sections are compact.  \textbf{Integrable behavior} means that all regular non-closed components are confined in bands of the plane,  i.e.  have an asymtptotic direction.  Integrable behavior corresponds to the decomposition of $M$ into cylinders of closed trajectories and tori (possibly,  with holes) on which $\mathcal{F}$ winds in a way that it is topologically equivalent to  irrational rotation.  Finally,  \textbf{chaotic behavior} means that the closure of some leaf of $\mathcal{F}$ coincides with a component of $M$ of genus at least $3$.  One of the main results in \cite{Dynnikov1999TheGO} is that the chaotic behavior occurs in a very rare number of cases,  see Theorems $1$ and $3$ there. 

\begin{thm}[Dynnikov,  \cite{Dynnikov1999TheGO}]\label{thm:codim1}
Fix a piecewise smooth and generic function $f$ and a vector $H$.  Then there exist two values $\tau_1(H), \tau_2(H)$,  $\tau_1 \leq \tau_2$ such that for all $\tau \notin [\tau_1(H),  \tau_2(H)]$ the behavior of corresponding sections of $M_{\tau}$ is trivial,  and for $\tau \in [\tau_1(H),  \tau_2(H)]$ it is integrable.  In the case when $\tau_1(H)=\tau_2(H)$,  the behavior may be chaotic.  Moreover,  the chaotic behavior is rare in the following sense : the set $\mathcal{O}$ of vectors $H$ corresponding to non-chaotic behavior is open and dense in $\mathbb{P}^2(\R)$.  
\end{thm}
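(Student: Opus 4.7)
The plan is to exploit the Morse-theoretic structure of the level surfaces $M_\tau$ together with the interplay between their topology and the measured foliation $\mathcal{F}_\tau$ cut out by the plane sections. By genericity of $f$, I would first assume $f$ is a Morse function on $\mathbb{T}^3$, so that $M_\tau$ is a closed smooth surface for all $\tau$ outside a finite set of critical values, and at each critical value $M_\tau$ undergoes an elementary handle surgery indexed by the local index of the singularity. For $\tau$ close to $\pm\pi$ the surface $M_\tau$ consists of small spheres around the extrema of $f$, on which every plane section is a disjoint union of closed curves; hence trivial behavior holds near the endpoints.

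The next step would be to decompose $M_\tau$ along its closed leaves of $\mathcal{F}_\tau$ into cylinders of periodic leaves plus minimal components (Maier--Levitt). Call a component \emph{essential} at height $\tau$ if it carries a $1$-cycle whose image in $H_1(\mathbb{T}^3;\R)$ is not annihilated by $H$. The first structural claim is that $I(H) := \{\tau : M_\tau \text{ has an essential component}\}$ is a closed interval $[\tau_1(H),\tau_2(H)]$, outside of which every leaf of $\mathcal{F}_\tau$ is compact. This rests on the fact that under the handle surgery relating $M_{\tau^-}$ and $M_{\tau^+}$, an essential homology class carried by $\mathcal{F}_\tau$ can only be born or die at specific Morse indices, forcing monotonicity of the essential part in $\tau$.

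The technical heart is showing that on the open interior of $I(H)$ every essential component is a torus-with-holes on which $\mathcal{F}_\tau$ is topologically an irrational linear flow, i.e.\ integrable. One argues that any minimal component of genus $\geq 2$ occurring at some $\tau_0 \in (\tau_1,\tau_2)$ would, under a small deformation of $\tau$, split via Morse surgery on the saddles of $\mathcal{F}_\tau$ into simpler pieces, so that higher-genus carriers are $\tau$-unstable. Consequently, a chaotic carrier can persist on an interval of $\tau$ only when that interval degenerates to a point, which is the only way chaos enters the picture and explains the condition $\tau_1(H) = \tau_2(H)$.

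For the last assertion, openness of $\mathcal{O}$ follows from the continuous dependence of $\tau_1(H), \tau_2(H)$ on $H$: if $\tau_1(H) < \tau_2(H)$, both quantities vary continuously under small perturbations of $H$ and the non-degeneracy persists, so chaos cannot appear. For density, given $H$ with $\tau_1(H) = \tau_2(H)$ carrying a chaotic component $C$, I would perturb $H$ so that the new direction ceases to annihilate a distinguished rational subspace of $H_1(C;\R)$ forcing $C$ to be essential; generically this perturbation opens the integrability interval and destroys chaos. The main obstacle will be the genus-reduction step: ruling out that $\mathcal{F}_\tau$ has a minimal component of genus $\geq 2$ on an open set of $\tau$ is delicate, requiring a careful analysis of saddle connections of $\mathcal{F}_\tau$ and how they evolve under simultaneous deformation of the cutting plane and the height parameter.
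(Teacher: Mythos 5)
This theorem is quoted from Dynnikov's 1999 paper and is not proved in the present note, so your proposal has to be measured against Dynnikov's actual argument rather than against anything written here. Your outline identifies the right objects --- Morse surgeries on $M_\tau$, the decomposition of $\mathcal{F}_\tau$ into cylinders of closed leaves and minimal components, and a homological notion of essential component --- but each of the three load-bearing steps is asserted rather than proved, and the mechanism you propose for the central one is not the right one. The claim that $I(H)$ is a single closed interval is itself one of Dynnikov's main theorems; ``monotonicity of the essential part in $\tau$'' does not follow from bookkeeping of Morse indices at the critical values of $f$, because the birth and death of essential cycles is governed by how the foliation $\mathcal{F}_\tau$ interacts with the surgeries, not by the surgeries alone. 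More seriously, your genus-reduction step rests on the heuristic that a minimal component of genus $\geq 2$ is ``$\tau$-unstable'' and must split under a small change of $\tau$. There is no such instability statement, and nothing in Morse theory a priori prevents a higher-genus minimal component from persisting over an interval. Dynnikov's actual argument is homological: after removing the cylinders of closed leaves and capping off the null-homologous boundary circles, one tracks the classes of the resulting carriers in $H_1(\mathbb{T}^3;\mathbb{Z})$; these classes are locally constant in $\tau$ and in $H$, the integrable regime corresponds to all carriers being genus-one components realizing a single indivisible class $\pm l$, and the chaotic regime to the essential part generating a rank-three subgroup, which is shown to be possible for at most one value of $\tau$ by a monotonicity argument on integer-valued homological data --- not by destabilizing saddle connections of $\mathcal{F}_\tau$.

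The final paragraph has the same character. Openness of $\mathcal{O}$ does not reduce to continuity of $\tau_1,\tau_2$: the set $\mathcal{O}$ also contains directions with $\tau_1(H)=\tau_2(H)$ but trivial or integrable behavior at the degenerate level, and you must show these are not limits of chaotic directions. Your density argument is circular as stated: a chaotic carrier $C$ already carries cycles not annihilated by $H$ (its leaves are open), so ``perturbing $H$ to make $C$ essential'' changes nothing. The correct route is through Dynnikov's stability zones: for each indivisible $l\in H_1(\mathbb{T}^3;\mathbb{Z})$ the set of directions whose carriers realize $\pm l$ is closed with nonempty interior, and $\mathcal{O}$ contains the union of the interiors of these zones together with the locus of everywhere-trivial behavior; open-density is a statement about this decomposition of $\mathbb{P}^2(\R)$, not about a perturbation of a single homology class. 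In short, the skeleton is plausible, but the three assertions you would need --- the interval structure, the exclusion of higher-genus carriers on a nondegenerate interval, and the open-density of $\mathcal{O}$ --- are exactly the content of Dynnikov's paper, and your sketches of them would not close.
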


\begin{que}\label{que:que1}
The question of whether the set $\mathcal{O}$ is of full Lebesgue measure is open,  even for $M$ of genus $3$.  
\end{que}

\begin{rem}\label{remark:number_of_parameters}
The space of couples function-covector $\{(f,  H)\}$ has infinite dimension although the qualitative behavior of sections depends only on the finite number of parameters.  Indeed,  one considers an exact $1$-form induced on $M$ by a linear form $\alpha=dH$ on $\R^3$.  Via Hodge theorem,  take a unique harmonic form $\omega$ on $M$ such that $\omega \in [\alpha]$.  Then $\omega$ defines  a cohomologous,  and even cobordant foliation to $\mathcal{F}$ which has the same global invariants as $\mathcal{F}$ itself.  In other words,  one can straighten out the foliation $\mathcal{F}$ and preserve the class of qualitative behavior (trivial,  integrable or chaotic).  The corresponding surface has a flat metric.  This shows the relationship of Novikov's problem with the dynamics of families of interval exchange transformations. 

Moreover,  the word generic in Theorem \ref{thm:codim1} has to be precised.  We refer our reader to the original article \cite{Dynnikov1999TheGO}.  Some details are discussed at the end of this Section.
\end{rem}

\begin{que}\label{que:que2}
A much less stronger question than Question \ref{que:que1} is open - prove that in the set of pairs (surface,  vector) the set of chaotic couples is of measure $0$.
\end{que}

The most strongest form of such type of questions is a following 

\begin{conj}[Novikov-Maltsev,  $2003$]
For a fixed surface $M$,  the Hausdorff dimension of the set of covectors $H$ admitting chaotic sections is smaller than $2$.
\end{conj}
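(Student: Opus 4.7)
The plan is to reduce the infinite-dimensional space of pairs $(M,H)$ to a finite-dimensional parameter space via Hodge-theoretic straightening, and then to identify the chaotic locus inside this space with a self-similar fractal whose Hausdorff dimension can be controlled. The first step exploits Remark \ref{remark:number_of_parameters}: replacing the section foliation $\mathcal{F}$ by the cohomologous foliation of the harmonic representative $\omega \in [dH]$ preserves the trivial/integrable/chaotic trichotomy, and the resulting flat structure on $M$ depends on only finitely many real parameters (essentially the absolute periods of $\omega$). It therefore suffices to bound the Hausdorff dimension of the chaotic stratum inside a finite-dimensional simplex of such parameters.

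Next, I would invert the helicoid construction of Section \ref{sec:Dynnikov}: up to cobordism, the straightened foliation on $M$ is the horizontal foliation on some Dynnikov helicoid $\widehat{S_{\mathcal{P}}^{\tau}}$ associated to a locally foldable tiling $\mathcal{P}$. The chaotic covectors $H$ then correspond exactly to those $(\mathcal{P},\tau)$ for which the parallel foliation of the tiling billiard on $\mathcal{P}$ contains a leaf whose closure supports a subsurface of genus at least $3$. The original topological question thus becomes a quantitative question about how such non-trivial dynamics vary with the geometric parameters of the tiling.

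Finally, I would apply a renormalization scheme on this parameter simplex. For genus $3$ surfaces arising from triangle or cyclic quadrilateral tilings, the renormalizations of \cite{HubertPaRo2019,PaRo2019,DHMPRS2020} organise the parameters into a piecewise-affine Rauzy-Veech-like induction, and the chaotic locus is precisely the set of parameters whose forward orbit never escapes a distinguished compact subset. For triangle tilings this non-escaping set is the Rauzy gasket, whose Hausdorff dimension is known to be strictly smaller than $2$; for cyclic quadrilaterals the attractor of the renormalization introduced in \cite{DHMPRS2020} should admit a similar bound, yielding the conjecture for every genus-$3$ surface in the image of the helicoid construction. In higher genus, the plan would be to develop an analogous induction on a larger combinatorial graph of locally foldable tilings, using the parameter description of \cite{Kenyon2018DimersAC} to bootstrap from the cases already understood.

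The main obstacle is twofold. First, the existence of an appropriate renormalization in genus $>3$ is open: one needs a version of Rauzy-Veech induction adapted to the linear involutions with flips produced by general locally foldable tilings, together with a uniform hyperbolicity estimate for the associated cocycle. Second, passing from a measure-zero bound, which typically follows softly from ergodicity of the renormalization, to a Hausdorff dimension strictly below $2$ requires quantitative positivity of Lyapunov exponents, a notoriously delicate point outside the classical Masur-Veech framework. The critical ingredient is likely a thermodynamic-formalism argument applied to the renormalization cocycle, in the spirit of the recent computations of the dimension of the Rauzy gasket itself; transporting such arguments to the combinatorics of higher-genus locally foldable tilings is, in my view, the essential difficulty.
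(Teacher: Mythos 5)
This statement is the Novikov--Maltsev Conjecture, which the paper records as an \emph{open problem} from 2003; no proof is given, and the paper explicitly notes (Question \ref{que:que1}) that even the weaker assertion that the chaotic covectors form a Lebesgue-null set for a fixed $M$ is open already in genus $3$. So there is nothing in the paper to compare your argument against, and what you have written is a research programme rather than a proof.

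Beyond that, your programme has a structural flaw. The conjecture fixes the surface $M$ and lets the covector $H$ range over $\mathbb{P}^2(\R)$. In the helicoid dictionary, however, the covector is determined by the combinatorics of the tiling: it is the image of the vertical direction under $A_{\mathcal{P}}$, it does not change as $\tau$ varies, and changing the shape of the tile $P$ changes the surface and the covector \emph{simultaneously}. The Rauzy gasket $\mathcal{R}$ and the Novikov gasket $\mathcal{N}$ therefore live in the parameter space of tile shapes --- a transversal slice of the space of pairs (surface, covector) --- and the bounds $\dim_H \mathcal{R}<2$ and $\dim_H \mathcal{N}<3$ say nothing directly about the set of chaotic $H$ for any single fixed $M$. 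What the tiling-billiard renormalizations address is much closer to Question \ref{que:que2} (the measure of chaotic \emph{pairs}) than to the Novikov--Maltsev statement. Moreover, the helicoids only produce centrally symmetric surfaces of genus at most $3$ (Lemma \ref{lem:topology_calculations}, Question \ref{que:details}), so even a corrected version of your reduction could not reach a general fixed $M$, let alone higher genus. Your closing paragraph correctly identifies the renormalization and Lyapunov-exponent difficulties, but the more basic issue is that the slice of parameter space you propose to control is not the one the conjecture is about.
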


\subsection{Results on tiling billiards and their topological interpretation}\label{subs:interpretation}

Our goal here is to include the study of tiling billiards in triangle and cyclic quadrilateral tilings into the setting of Novikov's problem.  


Lemma \ref{lem:3-periodic} states that a surface $\widehat{S_{\mathcal{P}}^{\tau}}$ is $3$-periodic,  with the vectors $V_1,  V_2, V_3$ defining the base of the corresponding lattice of symmetries.  Therefore,  there exists a unique linear map $A_{\mathcal{P}} \in \mathrm{SL}_3(\R)$ such that $A_{\mathcal{P}} (V_j)=E_j$,  with $E_j$ forming the standard orthonormal basis in $\R^3,  j=1,2,3$.  Note that $V_3=2 \pi \cdot E_3$.  Denote $ \widehat{M_{\mathcal{P}}^{\tau}}: = A_{\mathcal{P}}\widehat{S_{\mathcal{P}}^{\tau}}$ the rectified surface.  Then,  $M_{\mathcal{P}}^{\tau}:=\pi  (\widehat{M_{\mathcal{P}}^{\tau}})$ is a subsurface of a standard torus.  Traectories of a billiard are the connected components of horizontal sections of Dynnikov's helicoid. Under the linear map,  they map to the connected components of intersections $ \widehat{M_{\mathcal{P}}^{\tau}} \cap A_{\mathcal{P}} \{ \Theta=\theta\}$.  Once a helicoid is constructed,  two points of view differ only by a linear map!

\begin{rem}
The map $A_{\mathcal{P}}$ as well as the direction of the co-vector $H$ defining the planes $A_{\mathcal{P}} \{ \Theta=\theta\}$ doesn't depend on $\tau$ since the vectors $V_j$ do not depend on it.
\end{rem}

\begin{que}\label{que:details}
How large is a class of surfaces described (in terms of Remark \ref{remark:number_of_parameters}) by Dynnikov's helicoids for triangle and cyclic quadrilateral tilings?
\end{que}

The first non-trivial case of Novikov's problem (when chaotic behavior is possible) occurs in genus $g(M)=3$.  By Lemma \ref{lem:topology_calculations},  the maximal genus of Dynnikov's helicoids is equal to $3$ and such helicoids are always centrally symmetric.  We think that this is the only obstruction and that the
answer to the Question \ref{que:details} is : \emph{any centrally symmetric surface of genus }$3$.  A careful dimension count should be done,  see Remark \ref{remark:number_of_parameters}.

\smallskip

Let us now remind some results on the dynamics of considered tiling billiards.  The following has been conjectured in \cite{Davis2018},  first proven in \cite{HubertPaRo2019} and,  finally,  a simpler proof was found in \cite{PaRo2019} via renormalization.  As we have recently discovered,  the analogous proof,  even if in a different setting,  has already been provided in $1989$ by Meester and Nowicki in \cite{MeNo}.\footnote{Meester and Nowicki consider a percolation model on the circle which is exactly the circumcircle appearing via folding.  Their model is defined by drawing a chord in a circle and coloring all vertices in the set $\{\alpha n + \beta m, m,n \in \Z\} \subset \mathbb{S}^1$ on the left of the chord in one color,  and others in another color.  The corresponding coloring of the lattice $\Z^2$ with coordinates $(m,n)$ has open one-colored clusters. They correspond to escaping trajectories of triangle tiling billiards that fold into the initial chord.}

\begin{thm}[\cite{HubertPaRo2019, PaRo2019,  AvilaSkripchenkoHubert}]
\label{thm:n=3}
For a tiling billiard in a $P$-tiling defined by a triangle $P$,  the following holds :
\begin{enumerate}
\item[1.] For almost any $P$,  all trajectories are either periodic or linearly escaping.  
\item[2.] If a trajectory $\gamma$ escapes in a non-linear way then,  necessarily,  $\tau(\gamma)=0$ (it passes by circumcenters of all crossed tiles) and $P \in \mathcal{R}$.  Here $\mathcal{R}$ is a fractal set defined by an explicit continued fraction algorithm.  This set has zero measure and $\dim_H \mathcal{R} \in (1,2)$. 
\end{enumerate}
\end{thm}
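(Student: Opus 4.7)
The approach combines the Novikov-problem perspective from Section \ref{sec:Novikov} with a one-dimensional renormalization scheme for $3$-IETs with flips. First I would construct, for each triangle $P$ and each energy $\tau$, the Dynnikov helicoid $\widehat{S_\mathcal{P}^\tau}$ from Section \ref{sec:Dynnikov}, and its rectified torus embedding $M_\mathcal{P}^\tau \subset \mathbb{T}^3$. By the remark after Lemma \ref{lem:3-periodic}, the cutting covector $H = H(\mathcal{P})$ is independent of $\tau$, so the family $\{M_\mathcal{P}^\tau\}_\tau$ realizes the Dynnikov helicoids as level surfaces of a piecewise smooth function $f_\mathcal{P} : \mathbb{T}^3 \to [-\pi,\pi]$. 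Tiling billiard trajectories are then in bijection with connected components of plane sections of these level surfaces.

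Next I would apply Dynnikov's trichotomy (Theorem \ref{thm:codim1}) to translate it into the tiling-billiard language: trivial sections correspond to bounded --- hence periodic, by Theorem \ref{thm:first_properties}(3) --- trajectories; integrable sections correspond to linearly escaping trajectories with well-defined asymptotic direction; chaotic sections correspond to non-linearly escaping trajectories supported on a genus-$3$ component. Dynnikov's theorem then gives that the set of triangles admitting a chaotic regime is nowhere dense, and combined with the central symmetry of the helicoid (Lemma \ref{lem:topology_calculations}(1) and (2)), which forces $\tau_1(H(\mathcal{P})) = -\tau_2(H(\mathcal{P}))$, a chaotic regime occurs only at $\tau = 0$. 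This pins down the energy condition of part $2$ and proves a nowhere-dense version of part $1$.

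To strengthen nowhere-denseness to measure zero and to identify the exceptional triangles with the Rauzy gasket, I would pass to the $1$D renormalization. Folding reduces the $\tau = 0$ billiard to a $3$-IET with flips $F$ on the circumcircle (paragraph \ref{subs:sym_and_IET}), parametrized by the three arc-lengths of the tile inscribed in the circle, which lie in a simplex $\Delta$. Accelerating $F$ on the shortest subinterval produces a new $3$-IET with flips on a sub-interval, and the induced Arnoux--Rauzy map on $\Delta$ describes the parameter evolution. Periodic and linearly escaping regimes correspond to parameters whose orbit eventually terminates (a flip cancels, or the dynamics becomes rotation-like), while non-terminating parameters form a closed, self-similar invariant set $\mathcal{R}$ --- the Rauzy gasket, defined intrinsically by this continued fraction algorithm. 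The measure-zero statement then follows from the known fact that $\mathcal{R}$ has Lebesgue measure zero.

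The main obstacle is the Hausdorff-dimension estimate $\dim_H \mathcal{R} \in (1,2)$. The lower bound $\dim_H \mathcal{R} > 1$ is accessible via an explicit self-similar Cantor subset of $\mathcal{R}$ whose Hausdorff dimension exceeds $1$. The upper bound $\dim_H \mathcal{R} < 2$ is genuinely delicate: the Arnoux--Rauzy cocycle is non-uniformly hyperbolic and strongly non-conformal, so strict sub-maximality of the Hausdorff dimension requires careful control of Lyapunov exponents and bounded-distortion estimates along the cocycle, as in the approach of Avila--Hubert--Skripchenko. I expect this to be the technical heart of the proof.
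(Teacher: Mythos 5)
The paper does not prove Theorem \ref{thm:n=3}; it is stated as a cited result, and the surrounding discussion in paragraph \ref{subs:interpretation} only explains how it \emph{would} follow ``naively'' from the helicoid construction plus Dynnikov's Theorem \ref{thm:codim1}, together with the renormalization proofs in the references. Your proposal reproduces exactly this two-track architecture (Novikov-problem translation for the trichotomy and the $\tau=0$ localization; Arnoux--Rauzy renormalization for the measure-zero and Rauzy-gasket identification; Avila--Hubert--Skripchenko for $\dim_H\mathcal{R}<2$), so in outline you are on the paper's intended route.

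There is, however, one concrete gap, and it is precisely the one the paper flags: your step ``apply Dynnikov's trichotomy (Theorem \ref{thm:codim1})'' is not legitimate for triangle tilings as stated. By Lemma \ref{lem:topology_calculations}(3), the genus-$3$ surface $S_{\mathcal{P}}^{0}$ of an acute triangle has two \emph{double} (monkey) saddles, whereas the genericity hypothesis in \cite{Dynnikov1999TheGO} requires the level surfaces to be Morse, i.e.\ to have only simple saddles. So the trichotomy and the conclusion $\tau_1(H)=\tau_2(H)=0$ for the chaotic regime cannot be quoted directly; the paper states that this is exactly why the naive argument ``is not completely precise'' and that the gap is closed by the explicit computations of chaotic sections in \cite{PaRo2019} and \cite{DeLeo_2008} (or, equivalently, by running the renormalization argument from the start). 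Since your second half already sets up the $3$-IET-with-flips renormalization, the fix is to derive the $\tau(\gamma)=0$ necessity and the trichotomy from that one-dimensional analysis rather than from Theorem \ref{thm:codim1}; as written, the first half of your argument rests on a hypothesis that the triangle helicoids violate. A smaller caveat: the lower bound $\dim_H\mathcal{R}>1$ is not a soft consequence of a ``self-similar Cantor subset'' --- the gasket is not self-similar in the IFS sense, and the known lower bound ($1.19$, due to Guti\'errez-Romo and Matheus) required genuine work.
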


This set $\mathcal{R}$ is the \textbf{Rauzy gasket}.  We refer to \cite{ArnouxStarosta2013} for the classic definition of the Rauzy gasket.  Many other definitions have been given throughout the last fourty years,  related to the circle percolation (Meester-Nowicki \cite{MeNo}), dynamics of Arnoux-Rauzy family of interval exchange transformations (Arnoux-Rauzy \cite{AR}),  systems of isometries (Dynnikov-Skripchenko \cite{DS_band}),  Novikov's sections of some polyhedral object (Dynnikov-DeLeo \cite{DeLeo_2008}),  and,  as shown here,  in relation to the dynamics of non-linearly escaping trajectories of tiling billiards (Davis et al.  \cite{Davis2018},  Hubert and ourselves \cite{HubertPaRo2019,  PaRo2019}).  In the last years the understanding emmerged that all these different interpretations of the Rauzy gasket are equivalent,  as is implied in this work and shown in the works \cite{DHS}  and \cite {HubertPaRo2019}.  

\begin{rem}
The calculation of the Hausdorff dimension of the Rauzy gasket is highly non-trivial.  The first estimate $\dim_H \mathcal{R}<2$ was obtained in \cite{AvilaSkripchenkoHubert}.  Nowadays more refined norms exist and $\dim_H$ is confined into the interval $(1.19,  1.825)$.  The lower bound was obtained by Gutiérrez-Romo and Matheus in \cite{GutierrezMatheus19},  the upper bound is obtained by combining the arguments of Gamburd-Magee-Ronan \cite{Gamburd} with estimates by Baragar \cite{Baragar1998},  as was recently explained in \cite{Fougeron20}.
\end{rem}

\smallskip

Recently,  our French-Russian team managed to prove the following result,  analogous to Theorem \ref{thm:n=3} result for cyclic quadrilateral tilings.

\begin{thm}[\cite{DHMPRS2020}]\label{thm:n=4}
For a tiling billiard in a $P$-tiling defined by a cyclic quadrilateral $P$,  the following holds :
\begin{enumerate}
\item[1.] 
For almost any $P$,  all trajectories are either periodic or linearly escaping.  
\item[2.]
If a trajectory $\gamma$ escapes in a non-linear way then,  necessarily,  $\tau(\gamma)=0$ and $P \in \mathcal{N}$.  Here the set $\mathcal{N}$ is a fractal set defined by an explicit continued fraction algorithm,  has zero measure and $\dim_H \mathcal{N} <3$. 
\end{enumerate}
\end{thm}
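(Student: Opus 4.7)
The plan is to combine the Dynnikov helicoid reduction developed in Section \ref{sec:Dynnikov} with a renormalization scheme for the $4$-interval exchange transformations with flips that arise from paragraph \ref{subs:sym_and_IET}. By Lemma \ref{lem:topology_calculations}, for a cyclic quadrilateral $P$ containing its circumcenter the surface $S^{0}_{\mathcal{P}}\subset \mathbb{T}^3_{\mathcal{P}}$ is a centrally symmetric genus-$3$ surface, while for $\tau\ne 0$ or for the obtuse case the relevant surface has genus $1$. Thus every trajectory of the tiling billiard is realized as a connected component of a horizontal section of $\widehat{S}^{\tau}_{\mathcal{P}}$, and after applying the rectifying map $A_{\mathcal{P}}$ of Section \ref{subs:interpretation} it becomes a connected component of an $H$-section of a surface $M^{\tau}_{\mathcal{P}}\subset \mathbb{T}^3$, where $H$ is determined by $\mathcal{P}$ and is independent of $\tau$.

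First I would prove point $1$. Since $M^{\tau}_{\mathcal{P}}$ has genus $1$ for every $\tau\ne 0$, Theorem \ref{thm:codim1} prohibits chaotic behaviour at those energies; the sections are either all compact (trivial behaviour, corresponding to periodic trajectories) or confined to bands (integrable behaviour, corresponding to linearly escaping trajectories). For $\tau=0$ and generic $P$, Dynnikov's openness-and-density result applied to the one-parameter family $\{M^{\tau}_{\mathcal{P}}\}$ implies that $\tau=0$ lies outside the distinguished interval $[\tau_1(H),\tau_2(H)]$, so the $\tau=0$ layer is also non-chaotic. To promote density to full measure on the space of cyclic quadrilaterals (a $2$-parameter family up to similarity), I would run the stronger measure-theoretic argument through the renormalization described below, exactly as in \cite{AvilaSkripchenkoHubert,PaRo2019} for the triangular case.

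Second, for point $2$, any non-linearly escaping trajectory must correspond to a chaotic section, so by Theorem \ref{thm:codim1} and the genus count of Lemma \ref{lem:topology_calculations} one must have $\tau(\gamma)=0$. It remains to identify the set $\mathcal{N}$ of parameters allowing such behaviour. The folding reduces the flow on $S^{0}_{\mathcal{P}}$ to a parametric family $F(P,0)$ of $4$-IETs with flips acting on the folded circumcircle. I would set up an explicit continued fraction algorithm on this family, modelled on Rauzy--Veech induction but adapted to the flipped $4$-IET combinatorics and the central symmetry of $S^{0}_{\mathcal{P}}$; the renormalization operator is a piecewise projective map of a simplex of parameters, and $\mathcal{N}$ is defined as the parameters whose orbit never enters the periodic/linearly-escaping cells. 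By construction $\mathcal{N}$ is the attractor of an iterated function system of projective contractions.

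The main obstacle will be proving that $\mathcal{N}$ has zero Lebesgue measure and Hausdorff dimension strictly less than the ambient dimension $3$. Unlike the triangle case, whose parameter simplex is $2$-dimensional and yields the Rauzy gasket $\mathcal{R}$ of dimension $<2$, the cyclic quadrilateral family is $3$-dimensional, so one needs a quantitative contraction statement for the renormalization cocycle in a genuinely higher-dimensional setting. I would attack this by combining a Borel--Cantelli / distortion estimate to get $\mathrm{Leb}(\mathcal{N})=0$, with a thermodynamic-formalism or norm-estimate argument in the spirit of \cite{AvilaSkripchenkoHubert} and \cite{GutierrezMatheus19} to bound the pressure of the induced cocycle and hence obtain $\dim_H \mathcal{N}<3$. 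A second technical difficulty is the precise genericity condition on $P$ hidden in ``almost any'': one must verify that the boundary cases left uncovered by Theorem \ref{thm:codim1} (e.g.\ parameters where $\tau_1(H)=\tau_2(H)$ without being in $\mathcal{N}$, or the obtuse/right-angled stratum) form a set of measure zero in the parameter space of cyclic quadrilaterals, which reduces to a transversality check for the map $P\mapsto H(\mathcal{P})$ read off from Lemma \ref{lem:3-periodic}.
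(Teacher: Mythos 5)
This theorem is imported from \cite{DHMPRS2020}; the present paper does not prove it but only describes the strategy, and your plan reproduces that strategy faithfully: reduce trajectories to plane sections of the rectified helicoid $M^{\tau}_{\mathcal{P}}$, invoke Theorem \ref{thm:codim1} to confine possible chaos to a single energy level which symmetry forces to be $\tau=0$, then define $\mathcal{N}$ via a renormalization (continued-fraction algorithm) on the family of $4$-IETs with flips and establish $\mathrm{Leb}(\mathcal{N})=0$ and $\dim_H\mathcal{N}<3$ by thermodynamic formalism. This is exactly the division of labour the paper attributes to \cite{DHMPRS2020} (combinatorial renormalization generalizing \cite{PaRo2019}, plus Fougeron's thermodynamic formalism \cite{Fougeron20}), together with the observation that, since the cyclic-quadrilateral surfaces have only simple saddles, Dynnikov's genericity hypotheses are actually met.

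One step of your argument is wrong as stated. You claim that ``$M^{\tau}_{\mathcal{P}}$ has genus $1$ for every $\tau\ne 0$'' and cite Lemma \ref{lem:topology_calculations} for it; that lemma computes the genus only at $\tau=0$ (genus $3$ in the acute case, $1$ in the obtuse case) and says nothing about $\tau\ne 0$ --- indeed the remark following it leaves the $\tau$-dependence of the genus as an exercise, and the saddles coming from the tiling vertices persist for small $\tau\ne 0$, so a genus drop is not the mechanism. The correct route, which is the one the paper uses, is that Theorem \ref{thm:codim1} already forbids chaos at all but one energy $\tau_1(H)=\tau_2(H)$ for the \emph{fixed} covector $H=H(\mathcal{P})$, and the central symmetry $\tau\mapsto-\tau$ of the family of sections forces that distinguished energy to be $0$; no claim about the genus at $\tau\ne0$ is needed. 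Relatedly, your final ``transversality check'' misstates the role of the obtuse stratum: obtuse cyclic quadrilaterals form a set of positive measure, but they are harmless because there the $\tau=0$ surface has genus $1$ and hence admits no chaotic section. Finally, be aware that the passage from Dynnikov's open-and-dense statement to the full-measure assertion in point 1 cannot be done through Theorem \ref{thm:codim1} at all (that is precisely Question \ref{que:que1}); it rests entirely on the measure-zero statement for $\mathcal{N}$, which your plan correctly routes through the renormalization but which is the genuinely hard content of \cite{DHMPRS2020}.
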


 The first difficulty in proving this Theorem was to find a renormalization process in order to define the algorithm that constructs $\mathcal{N}$ via a continued fraction algorithm.  Once this was done,  the main technical difficulty consisted in proving the ergodic properties of such an algorithm.  The first part is combinatorial and generalises the methods in \cite{PaRo2019},  the second part is  based on the thermodynamic formalism,  elaborated recently by Fougeron \cite{Fougeron20}.  
 
Modulo the regularity details described in Question \ref{que:details} that have to be figured out,  Theorem \ref{thm:n=4} solves an open case of Novikov's problem since it characterizes chaotic directions for symmetric genus $3$ surfaces.  In relation to this interpretation,  the set $\mathcal{N}$ of cyclic quadrilaterals that may exhibit non-linear escaping trajectories is called the \textbf{Novikov's gasket.}

\bigskip

The renormalization methods proposed in \cite{PaRo2019,  DHMPRS2020} prove the first points of both Theorems \ref{thm:n=3} and \ref{thm:n=4}.  Although one can see that these first points follow, \emph{naively},  from Dynnikov's result of 1999,  namely Theorem \ref{thm:codim1} here.  Indeed,  since for a fixed direction $H$,  the chaotic behavior can only happen for one energy parameter,  the central symmetry of $M_{\tau}$ implies that this parameter is exactly $\tau=0$.  

Although,  the consequence is,  as we said,  not completely precise.  The regularity is problematic.  Indeed,  consider a function $f$ corresponding to some Dynnikov's helicoid for triangle tilings.  Then $f$ is not generic in Dynnikov's sense : its level surfaces have double saddle points and the work \cite{Dynnikov1999TheGO} works with surfaces admitting only simple saddles.  Although,  the explicit calculations of chaotic sections were done for this case in \cite{PaRo2019} and in \cite{DeLeo_2008},  and they finalise the proof of Theorem \ref{thm:n=3}.

For cyclic quadrilaterals,  Dynnikov's results do apply since the surfaces are generic enough and the corresponding saddles are simple.  The point 1.  of Theorem \ref{thm:n=4} is then a direct consequence of Theorem \ref{thm:codim1}.  We wonder if the genericity assumptions of Theorem \ref{thm:codim1} in \cite{Dynnikov1999TheGO} could be weakened in order to apply directly to surfaces with saddles of higher multiplicity. 

\begin{rem}
Triangle tilings can be seen as degenerations of cyclic quadrilateral tilings with two vertices of a quadrilateral approaching by following the arc of the circumcircle of a tile.  The corresponding surfaces are degenerations of a more general case : saddle points collide in a double (monkey) saddle.
\end{rem}

\subsection{Tree Conjecture for cyclic quadrilateral tilings}\label{subs:Tree Conjecture 4}
In this paragraph,  we advance towards the understanding of the symbolic dynamics of cyclic quadrilateral tilings,  analogous to Theorem \ref{thm:tree_conjecture}.

\begin{conj}[Tree Conjecture for cyclic quadrilateral tiling billiards]\label{conj:TCforquadrilaterals}
Any periodic trajectory $\gamma$ in cyclic quadrilateral tiling doesn't contour tiles,  i.e.  the domain $\Omega_{\gamma}$ bounded by it doesn't contain a full tile.
\end{conj}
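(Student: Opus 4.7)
The plan is to adapt the parallel-foliation proof of the triangle Tree Conjecture from \cite{PaRo2019} to cyclic quadrilaterals, using the Dynnikov helicoid of Section \ref{sec:Dynnikov} as a topological control device. The argument is by contradiction: suppose a tile $P \in \mathcal{P}$ is contained in $\Omega_{\gamma}$, so that the four edges of $P$ form a cycle in the graph of $\mathcal{P}$-vertices and edges lying inside $\Omega_{\gamma}$, and derive an obstruction from the topology of $S_{\mathcal{P}}^{\tau(\gamma)}$.

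First, study the parallel foliation $\mathcal{F}$ through $\gamma$ (paragraph \ref{subs:parallel}): since $P \subset \Omega_{\gamma}$, every leaf of $\mathcal{F}$ meeting the interior of $P$ is bounded, hence by Theorem \ref{thm:first_properties}(3) periodic and entirely contained in $\Omega_{\gamma}$, so $\Omega_{\gamma}$ decomposes, away from singular leaves through vertices, into nested cylinders of periodic leaves of $\mathcal{F}$. Next, pass to the Dynnikov helicoid: the trajectory $\gamma$ lifts to a compact horizontal component $\widehat\gamma \subset \widehat{S_{\mathcal{P}}^{\tau(\gamma)}}$, and its projection $\overline\gamma$ to the compact surface $S := S_{\mathcal{P}}^{\tau(\gamma)} \subset \mathbb{T}^3_{\mathcal{P}}$ is a closed leaf of the horizontal foliation. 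By Lemma \ref{lem:topology_calculations}, $S$ has genus at most $3$ and an explicit finite collection of saddles corresponding to the vertices of $\mathcal{P}$, whose heights on $S$ are controlled by the quasi-periodic function $f$ of Lemma \ref{lem:lem_function}. The target contradiction is that the hypothesised interior tile $P$ forces the separatrices of the four vertex-saddles of $P$ onto the same side of $\overline\gamma$ on $S$, which, combined with the central symmetry of Lemma \ref{lem:topology_calculations}(1), produces more singularity contributions than the genus-$\le 3$ budget of $S$ allows.

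The main obstacle is this last bookkeeping step: one must carefully match the in-plane combinatorics around $\partial P$ with the surface-topological picture on $S$, tracking the quasi-periodic twist encoded by $f$ and determining which of the four vertex-saddles of $P$ actually lie on the bounded side of $\overline\gamma$. A secondary difficulty is the regime dependence of $g(S)$: $g(S) = 3$ for the most interesting case of acute $P$ with $\tau(\gamma) = 0$, but $g(S) = 1$ in the obtuse case (Lemma \ref{lem:topology_calculations}(3)), so a separate treatment, perhaps via perturbation along the $\tau$-family of helicoids, will be needed. The new idea is precisely this bridge between parallel-foliation combinatorics in the tiled plane and surface topology on $\mathbb{T}^3_{\mathcal{P}}$, and rigorously implementing it is the technical heart of the proof.
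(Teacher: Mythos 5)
There is a genuine gap here, and it is worth being explicit about where it sits: the statement you are proving is stated in the paper as a \emph{conjecture}, and the paper itself only records partial progress towards it. Your strategy is essentially the one the paper pursues: reduce the Tree Conjecture, via the parallel foliation through $\gamma$, to the behaviour of the singular leaves (the ``petals'') inside $\Omega_{\gamma}$ -- this is the Bounded Flower Conjecture reduction from \cite{PaRo2019} -- and then try to rule out the bad configurations by a genus count on $S_{\mathcal{P}}^{\tau}$. The paper carries this out in exactly one case: when two distinct petals pass through the same vertex $v$. If instead the nesting $\Omega_{\gamma}\subset\Omega_{\gamma'}$ (or the reverse) were to occur, one would get two periodic leaves of the same energy turning in opposite senses, i.e.\ a horizontal section of $\widehat{S_{\mathcal{P}}^{\tau}}$ with one component of electron type and one of hole type, forcing $g(S)\geq 4$ and contradicting Lemma \ref{lem:topology_calculations}. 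That is the only place where the genus-$\leq 3$ budget is actually cashed in.

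The step you defer as ``the last bookkeeping step'' -- matching the combinatorics around $\partial P$ with separatrix positions on $S$ and extracting excess singularity contributions -- is precisely the case the authors state they were \emph{not} able to eliminate: the situation where $\gamma$ is the only singular trajectory through $v$ in its parallel foliation, so that either $\gamma$ fails to cross both tiles $P$ and $P_e$ adjacent to $v$, or it does but with $e\notin\Omega_{\gamma}$. In that configuration there is only one relevant closed component at the given height, no electron/hole pair, and hence no genus obstruction of the kind you invoke; the central symmetry of Lemma \ref{lem:topology_calculations}(1) does not by itself manufacture a second oppositely-oriented component. For triangle tilings this residual case was killed using extra symmetries of the $3$-IET (and the double-saddle structure) that have no analogue for cyclic quadrilaterals, which is exactly why the quadrilateral statement remains open. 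So your proposal reproduces the paper's partial argument but does not close the conjecture; the ``technical heart'' you postpone is the open problem itself, not a routine verification.
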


The symbolic behavior of any periodic trajectory $\gamma$ is defined by the behavior of singular trajectories in its parallel foliation inside $\Omega_{\gamma}$.  Using this idea,  we have shown in \cite{PaRo2019} that for any locally foldable tiling,  the Tree Conjecture is equivalent to the following Bounded Flower Conjecture dealing with only singular trajectories (or \textbf{petals}).

\textbf{Bounded Flower Conjecture.} Any singular periodic trajectory $\gamma$ passing by a vertex $v$ of a tiling satisfies the two following properties : first,  it intersects two neigboring tiles $P$ and $P_e$,  $P \cap P_e=e$; second,  $e \in \Omega_{\gamma}$.

\smallskip

Let us include a petal $\gamma$ in its parallel foliation.  If there is another petal $\gamma'$ passing by the same vertex $v$,  then we can prove that the Bounded Flower Conjecture for $\gamma$ holds.  If it doesn't,  it would mean that either $\Omega_{\gamma} \subset \Omega_{\gamma'}$ or $\Omega_{\gamma'} \subset \Omega_{\gamma}$,  and two trajectories have opposite orientations. Then there exist two periodic trajectories of the same energy $\tau$ in the parallel foliation turning in different senses.  It means that a corresponding helicoid $\widehat{S_{\mathcal{P}}^{\tau}}$ has a section with two connected components,  one of electron type,  and one of hole type in the terminology of \cite{Dynnikov1999TheGO}.  This and the connectedness of $\widehat{S_{\mathcal{P}}^{\tau}}$ would imply that the surface $S_{\mathcal{P}}^{\tau}$ has genus at least $4$ which brings a contradiction with Lemma
\ref{lem:topology_calculations}. 

Unfortunately,  we were not yet able to eliminate the sitation when $\gamma$ is an only singular trajectory passing by $v$ in its parallel foliation,  giving the obstructions to Bounded Flower Conjecture.  It would mean that either $\gamma$ doesn't pass by $P$ and $P_e$,  or it does but with $e \notin \Omega_{\gamma}$.  In the case of triangle tilings,  the two cases were eliminated by using the additional symmetries which are not anymore present for quadrilaterals.  The symbolic dynamics for quadrilateral tiling billiards is more complicated and still needs to be understood in more detail. 

\section{Perspectives for higher genus}\label{sec:generalisations_last}
The ideas of Section \ref{sec:folding} apply to any locally foldable tiling: parallel foliations exist,  and all bounded trajectories are periodic and stable.  Moreover,  the quasiperiodicity observed in Lemma \ref{lem:lem_function} is present in any locally foldable periodic tiling.

In this Section we show why the helicoid of Section \ref{sec:Dynnikov}, constructed there for triangle and cyclic quadrilateral tilings,  can be constructed for many other locally foldable polygonal \emph{periodic} tilings. 

\begin{rem}[Combinatorial data of a locally foldable periodic tiling]
Any locally foldable polygonal periodic tiling defines a bipartite graph $G$ on the $2$-torus $\mathbb{T}^2$.  Indeed,  we consider a dual graph (a graph of faces of the tiling) : two tiles are connected if and only if they have a common edge  in the tiling,  see Figure \ref{fig:graphs} for two examples.  By periodicity,  this graph factors to the graph $G$ on the torus.  Since the locally foldable tiling is $2$-colorable,  $G$ is bipartite. Of course,  one such graph $G$ defines a family of corresponding locally foldable tilings.  We say that these tilings have \textbf{the same combinatorics.} 
\end{rem}

The apparent difficulty in the realization of a helicoidal construction for a general locally foldable periodic tiling is that it is not clear what should be the energy parameter $\tau$.  In the point 1.  of Theorem \ref{thm:first_properties} $\tau$ was defined as a distance to a circumcenter of a triangle or cyclic quadrilateral.  Here we point out an approach that gives a substitute to this circumcenter in the general case.  For this,  we use recent results obtained on locally foldable tilings in relationship to the study of dimers.

It is non-trivial to describe the set of parameters of locally foldable periodic polygonal tilings of fixed combinatorics $G$.  This can been done by following recent works on dimers,  in particular that by Kenyon,  Lam,  Ramassamy and Russkikh in \cite{Kenyon2018DimersAC}.  There the authors show that there is a bijection beween such tilings and \emph{liquid phase dimer models}.  This bijection uses the beautiful connection of the dimer model with the complex algebraic curves and their amoebas established by Kenyon,  Okounkov and Sheffield.  It happens that the studied curves are of a very special type,  namely Harnack curves,  we refer to \cite{KOS2006} for more details.

\begin{figure}
\centering
\includegraphics[scale=0.8]{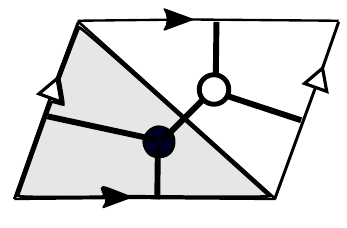}
$\;\;\;\;\;\;\;$
\includegraphics[scale=0.7]{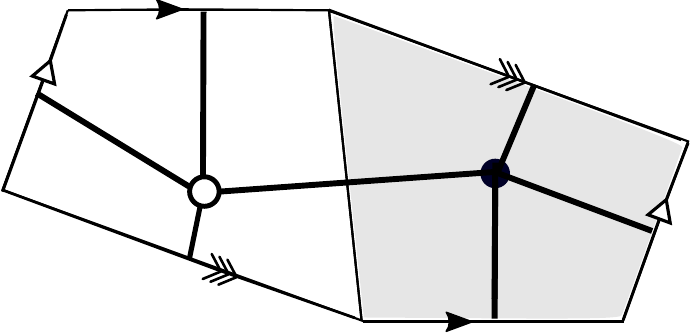}
\caption{Bipartitie graphs on the torus $\mathbb{T}^2$
defining the combinatorics of  triangle and quadrilateral tilings.  Both graphs have $2$ vertices,  the triangle graph has $3$ edges,  the cyclic quadrilateral graph has $4$ edges.  The torus is given as regluing the opposite parallel sides of the fundamental domain of the tiling.  }\label{fig:graphs}
\end{figure}

From all this important theory we use only the fact that \emph{typically} the locally foldable tilings fold into bounded domains.\footnote{This corresponds to the spectral curve only having simple zeroes.} In this case,  take any tile $P$ and all of its copies $gP$ in the tiling,  here $g$ is an element of the lattice of isometries of the tiling.  Then,  the boundedness of the folding implies that for all $g, $ the images of $P$ and $g(P)$ under folding differ by a rotation with some center $C$.\footnote{Indeed,  the orbit of a discrete subgroup $H$ of the affine group is bounded only if $H$ is a subgroup of $\mathrm{SO}(2)$.} Moreover,  the center $C$ can't depend on $P$ in order for the folding to be bounded.  This center $C$ is a point with respect to which we define the energy $\tau$! Once this step is done,  the helicoid construction of Section \ref{sec:Dynnikov} is repeated word by word.  

\begin{que}
Suppose that the helicoid $S_{\mathcal{P}}^{\tau}$ exists for a locally foldable tiling $\mathcal{P}$.  What is its genus as a function of 
$\mathcal{P}$ and $\tau$ ?
\end{que}

\begin{que}
What families $\mathcal{T}_{\mathcal{P}}$ of interval exchange transformations  arise as first-return maps on some well-chosen transversals ? 
\end{que}

We hope to answer these questions in future work,  in order to prove the following 

\begin{conj}\label{conj:probability}
Fix the combinatorics of a periodic $2$-colored tiling of a plane,  defined via a bipartite graph $G$ on the $2$-torus.  Suppose that a tiling with such combinatorics folds into a bounded domain.  Then,  the non-linear escape of trajectories on such tiling is only possible if the trajectories pass by the point $C$.  Otherwise,  the trajectories either escape linearly or are periodic.
\end{conj}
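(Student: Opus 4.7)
The plan is to mimic, in the general locally foldable setting, the argument sketched at the end of paragraph \ref{subs:interpretation} for symmetric genus $3$ surfaces. Assuming the folding is bounded, we first exploit the existence of the center $C$ produced in Section \ref{sec:generalisations_last}: in every tile $P$, write $C_P$ for the image of $C$ under the (local) folding and define $\tau(\gamma)$ as the signed distance from $C_P$ to the oriented chord $\gamma\cap P$. The same commutativity argument that proved point $1$ of Theorem \ref{thm:first_properties} shows $\tau(\gamma,P)$ is independent of $P$, so the sets $\Gamma_{\tau,\theta}$ of paragraph \ref{subsubs:Dynnikov} are well-defined for any such tiling, and lifting them produces a piecewise smooth surface $\widehat{S_{\mathcal{P}}^{\tau}}\subset\R^3$.

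Next I would extend Lemma \ref{lem:3-periodic}. The proof there used only Lemma \ref{lem:lem_function}, which the paper notes holds for any locally foldable tiling: the two horizontal periods $V_1(\mathcal{P}),V_2(\mathcal{P})$ are generated by a basis of the tile lattice combined with the quasi-periodic defect $\varphi$, and the vertical period is $V_3=(0,0,2\pi)$. Rectifying by $A_{\mathcal{P}}\in\mathrm{SL}_3(\R)$ sending $V_j\mapsto E_j$ places $M_{\mathcal{P}}^{\tau}:=\pi(A_{\mathcal{P}}\widehat{S_{\mathcal{P}}^{\tau}})$ inside $\mathbb{T}^3$. Horizontal planes become planes of a fixed covector $H=A_{\mathcal{P}}^{-T}E_3$, independent of $\tau$. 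Since the $M_{\mathcal{P}}^\tau$ foliate $\mathbb{T}^3$, they are the level sets of a function $f:\mathbb{T}^3\to[-\pi,\pi]$, and connected components of tiling billiard trajectories of parameter $\tau$ are exactly the connected components of $H$-sections of $f^{-1}(\tau)$.

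The core of the proof is then an application of Dynnikov's Theorem \ref{thm:codim1} to $(f,H)$. The theorem produces two values $\tau_1(H)\le\tau_2(H)$ such that every section is trivial outside $[\tau_1,\tau_2]$, integrable in its interior, and chaos is only possible when $\tau_1=\tau_2$. Back on the tiling, \emph{trivial} means all trajectories of energy $\tau$ are bounded, hence periodic by point $3$ of Theorem \ref{thm:first_properties}; \emph{integrable} means every non-compact trajectory has an asymptotic direction, i.e. escapes linearly; \emph{chaotic} is precisely the non-linear escape forbidden by the conjecture. To pin the chaotic level at $\tau=0$ I would use the reversal symmetry of the billiard flow: sending a trajectory to its time-reverse interchanges $(\tau,\theta)$ with $(-\tau,\theta+\pi)$, so the involution $\sigma:(\mathbf{X},\Theta)\mapsto(\mathbf{X},\Theta+\pi)$ maps $\widehat{S_{\mathcal{P}}^{\tau}}$ to $\widehat{S_{\mathcal{P}}^{-\tau}}$. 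Projecting to $\mathbb{T}^3$, $\sigma$ preserves the section direction $H$ and sends $f$ to $-f$; hence $\tau_1(H)=-\tau_2(H)$, and the degenerate case $\tau_1=\tau_2$ forces the unique chaotic energy to be $\tau=0$. By construction a trajectory with $\tau=0$ passes through $C$ in every tile it crosses, which is the alternative allowed by Conjecture \ref{conj:probability}.

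The main obstacle is regularity. The function $f$ coming from a helicoid is almost never generic in the sense of \cite{Dynnikov1999TheGO}: already for triangle tilings the surface carries double (monkey) saddles, and for more complicated combinatorial graphs $G$ one expects higher multiplicity saddles, and possibly non-isolated critical loci coming from vertices of higher valence. To invoke Theorem \ref{thm:codim1} literally one must either desingularize $f$ by a $\sigma$-equivariant perturbation inside the helicoid family and argue that $\tau_1,\tau_2$ pass to the limit, or extend Theorem \ref{thm:codim1} to the class of surfaces arising from Harnack spectral data in \cite{Kenyon2018DimersAC}, which is the correct general framework suggested by Question \ref{que:details}. A secondary issue is the faithfulness of the dictionary: showing that an integrable component of $f^{-1}(\tau)$ really corresponds to a linearly escaping trajectory (and not merely one with an asymptotic direction) should follow from combining the $\sigma$-invariance with the stability statement of Theorem \ref{thm:first_properties}, but it must be checked carefully when several integrable components coexist at the same level.
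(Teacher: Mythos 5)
The statement you are proving is left as a \emph{conjecture} in the paper precisely because the argument you give is the paper's own: the author states that Conjecture \ref{conj:probability} ``follows naively'' from Theorem \ref{thm:codim1} together with the helicoidal construction (center $C$ from the bounded folding, energy $\tau$ as signed distance to $C$, $3$-periodicity via the general form of Lemma \ref{lem:lem_function}, rectification by $A_{\mathcal{P}}$, and the symmetry forcing the degenerate level to be $\tau=0$), but declines to announce it as a theorem because locally foldable tilings produce saddles of higher multiplicity while \cite{Dynnikov1999TheGO} assumes the Morse property. Your proposal reproduces this sketch faithfully and, to your credit, names exactly the same obstruction (genericity of $f$, desingularization or extension of Theorem \ref{thm:codim1}) as the open step; so it is the right approach, but it is a proof sketch with the same acknowledged gap, not a complete proof, and closing that gap is precisely what would be needed to promote the conjecture to a theorem.
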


Conjecture \ref{conj:probability} follows \emph{naively} from Theorem \ref{thm:codim1} and helicoidal construction.  We do not announce it as a result since the regularity details have to be thouroughly checked,  as discussed in paragraph \ref{subs:interpretation}.  Indeed,  locally foldable tilings permit mutliplicity in saddles,  although the arguments in \cite{Dynnikov1999TheGO} suppose Morse property.  Nevertheless, we believe that these complications are avoidable.  Moreover,  as follows from dimer model theory,  this Conjecture would apply to an open set of parameters of locally foldable tilings,  maybe even of full measure.

A following much stronger conjecture is a reformulation of Question \ref{que:que2}.

\begin{conj}
The set of parameters of locally foldable tilings of fixed combinatorics $G$ admitting non-linearly escaping trajectories,  has measure zero.
\end{conj}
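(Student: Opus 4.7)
The plan is to follow the template of Theorems \ref{thm:n=3} and \ref{thm:n=4}, replacing the ad hoc renormalization of triangle and cyclic quadrilateral tilings by a general renormalization attached to the combinatorics $G$. First, using Conjecture \ref{conj:probability} (whose proof I would assume), reduce the problem to trajectories of zero energy: if $\gamma$ is non-linearly escaping then $\tau(\gamma)=0$, so $\gamma$ lives on the centrally symmetric helicoid $S_{\mathcal{P}}^{0}$. The bad parameter locus in $\mathcal{M}_G$ is then the projection of the set of $(\mathcal{P},\theta)$ for which the horizontal foliation of $S_{\mathcal{P}}^{0}$ has a chaotic leaf in direction $\theta$.

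Second, I would build a Rauzy--Veech-like induction adapted to $G$. By the generalization of Lemma \ref{lem:topology_calculations} to locally foldable tilings with bounded folding, the first-return map of the zero-energy tiling billiard flow on the folded circumcircle is an interval exchange transformation with flips, whose number of intervals is controlled by the number of edges of $G$. The Kenyon--Lam--Ramassamy--Russkikh dimer parameterization \cite{Kenyon2018DimersAC} supplies a natural coordinate system on the liquid-phase open subset of $\mathcal{M}_G$, on which the folding and hence the induced i.e.t.\ depend smoothly on parameters. This allows me to define a piecewise-projective renormalization operator $R_G$ on $(\mathcal{P},\theta)$, characterized by a longest-petal cutting procedure as in \cite{PaRo2019, DHMPRS2020}; non-linearly escaping parameters are exactly those whose forward orbit under $R_G$ never enters a periodic domain.

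Third, I would establish ergodic properties of $R_G$ via thermodynamic formalism, in the spirit of Fougeron \cite{Fougeron20}, to exhibit a spectral gap for the transfer operator attached to the induction cocycle. Combined with the Avila--Hubert--Skripchenko--type contraction arguments behind \cite{AvilaSkripchenkoHubert}, this yields positive top Lyapunov exponent and therefore a strict Hausdorff-dimension drop for the non-escaping set inside $\mathcal{M}_G$. A strict drop in Hausdorff dimension, in particular, implies zero Lebesgue measure, which is the desired conclusion.

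The hard part will be twofold. On the combinatorial side, the Rauzy diagram grows rapidly with the complexity of $G$, so even identifying a finite set of vertices and verifying a Veech-type positivity condition for the associated matrix cocycle is a substantial piece of work. On the regularity side, vertices of $G$ of large valence produce saddles of high multiplicity on $S_{\mathcal{P}}^{0}$, so one is forced outside the Morse-generic regime of Dynnikov's Theorem \ref{thm:codim1}; as in \cite{PaRo2019, DeLeo_2008} for triangles, such degenerations must be treated by hand rather than by citing Dynnikov as a black box. A final technical step is to check that the dimer parameterization of \cite{Kenyon2018DimersAC} is absolutely continuous with respect to the renormalization-invariant measure produced by the thermodynamic formalism, so that ``measure zero'' on one side really does transfer to the parameter space of tilings.
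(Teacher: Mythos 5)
The statement you are trying to prove is stated in the paper as a \emph{conjecture}: it is explicitly presented as a reformulation of Question \ref{que:que2}, which the paper says is open, and the paper offers no proof (nor claims one). So there is nothing in the paper to compare your argument against; the only fair assessment is whether your proposal actually constitutes a proof. It does not. It is a research program whose every substantive step is itself an open problem, and you say as much. Concretely: (i) Step 1 assumes Conjecture \ref{conj:probability}, which is unproven, and in any case the reduction to $\tau=0$ contributes nothing toward ``measure zero'' in the space of tilings, since every tiling $\mathcal{P}$ has a zero-energy level --- the entire content lies in showing that for almost every $\mathcal{P}$ the horizontal foliation of $S_{\mathcal{P}}^{0}$ has no chaotic leaf. (ii) Step 2 posits a renormalization operator $R_G$ for an arbitrary bipartite graph $G$ on the torus. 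No such operator is known to exist; the paper emphasizes that even for the single case of cyclic quadrilaterals, ``the first difficulty \dots was to find a renormalization process,'' and it poses as an open question whether the bad set is the invariant gasket of \emph{any} continued fraction algorithm. Asserting that a ``longest-petal cutting procedure'' works for all $G$ is not a construction. (iii) Step 3 invokes a spectral gap and a Hausdorff-dimension drop for a transfer operator attached to an algorithm that has not been defined; the thermodynamic-formalism machinery of \cite{Fougeron20} requires verifying concrete hypotheses (e.g.\ a Veech-type positivity condition) that you acknowledge you cannot check.

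There is also a structural obstacle your plan underestimates. For general $G$ the surface $S_{\mathcal{P}}^{0}$ can have genus strictly larger than $3$, and the measure-zero question for chaotic sections is open there in a strong sense --- the paper notes that even Question \ref{que:que1} is open for genus $3$, and that the genus-$3$ centrally symmetric case resolved in \cite{DHMPRS2020} had been open for forty years. Your argument treats higher genus as a routine generalization of the genus-$3$ case, when in fact no analogue of the Rauzy/Novikov gasket description is available there, and Dynnikov's Theorem \ref{thm:codim1} only isolates chaos to a single energy level for Morse-generic surfaces, which (as you correctly note) the helicoids of high-valence $G$ are not. In short: your outline points in the same direction the paper gestures toward for future work, but the conjecture remains a conjecture; the proposal contains no step that is both new and complete.
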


\begin{que}
Is such set an invariant gasket of some continued fraction algorithm?
\end{que}

\smallskip

We find very exciting a possibility to construct multi-dimensional fractal objects corresponding to every bipartie graph on the torus.  It could make quite a collection! For the graphs,  corresponding to triangle and quadrilateral periodic tilings,  these objects are,  respectively,  the Rauzy and the Novikov gaskets.

\begin{center}
\textbf{ACKNOWLEDGMENTS}
\end{center}
I am grateful to Ivan Dynnikov for his beautiful idea of a helicoid that he shared in a short on-line call during the pandemic,  Section \ref{sec:Dynnikov} is entirely based on it.  I am grateful to Pascal Hubert  and Bruno Sevennec for fruitful discussions on the subject and comments on the preliminary versions of this text.  I am thankful to Dima Chelkak  for introducing me to the dimer model and Benoît Laslier for answering my questions on it and advice.  I am obliged to Théo Marty for the Figure \ref{fig:helicoid} he drew in Inkscape in one evening and to Paul Mercat for his 3D prints of Dynnikov's helicoids.  I am also thankful to my new home,  \emph{Institut de Mathématiques de Marseille,}  and the members of our laboratory for warm and productive atmosphere.  

\smallskip

\bibliographystyle{plain}

\bibliography{references}

\Addresses

\end{document}